\newtheorem{theorem}{Theorem}[section]
\newtheorem{lemma}{Lemma}[section]
\newtheorem{Conjecture}{Conjecture}[section]
\newtheorem{open problem}{\sc \bf Open problem}[section]
\newtheorem{remark}{Remark}[section]
\numberwithin{equation}{section} \setlength{\oddsidemargin}{10pt}
\begin{document}

\title[On the Wilker and Huygens-type inequalities]
   {On the Wilker and Huygens-type inequalities}

\author[{ C.-P. Chen }]{ Chao-Ping Chen$^{*}$}

\address{C.-P. Chen: School of Mathematics and Informatics, Henan Polytechnic University, Jiaozuo City 454000, Henan Province, China}
 \email{chenchaoping@sohu.com}

\author[{ R.B. Paris }]{Richard B. Paris}

\address{R.B. Paris: Division of Computing and Mathematics,\\
 University of Abertay, Dundee, DD1 1HG, UK}
 \email{R.Paris@abertay.ac.uk}

\thanks{*Corresponding Author}

\thanks{2010 Mathematics Subject Classification.   26D05}

\thanks{Key words and phrases. Wilker and Huygens-type inequalities;   Trigonometric  functions;   Bernoulli   numbers}

\begin{abstract}
Chen and  Cheung [C.-P. Chen, W.-S. Cheung, Sharpness of Wilker and Huygens type inequalities, J. Inequal. Appl. 2012 (2012) 72,
\url{http://dx.doi.org/10.1186/1029-242X-2012-72}] established sharp Wilker and Huygens-type inequalities.  These authors also proposed three conjectures on Wilker and Huygens-type inequalities.
In this paper, we consider these conjectures. We also present sharp Wilker and Huygens-type inequalities.
\end{abstract}

\maketitle

\section{\bf Introduction}
Wilker  \cite{WilkerE3306} proposed the following two open problems:
\par
(a) Prove that if $0<x<\pi/2$, then
\begin{equation}\label{Wilker-1}
\left(\frac{\sin x}{x}\right)^{2}+\frac{\tan x}{x}>2.
\end{equation}
\par
(b) Find the largest constant c such that
\begin{equation*}\label{Wilker-2}
\left(\frac{\sin x}{x}\right)^{2}+\frac{\tan x}{x}>2+cx^3\tan x
\end{equation*}
for $0<x<\pi/2$.
In \cite{Sumner264--267}, the inequality \eqref{Wilker-1} was proved,
and the following inequality
\begin{equation}\label{Wilker-II}
2+\left(\frac{2}{\pi}\right)^{4}x^3\tan x<\left(\frac{\sin
x}{x}\right)^{2}+\frac{\tan x}{x}<2+\frac{8}{45}x^3\tan x,\qquad
 0<x<\frac{\pi}{2}
\end{equation}
was also established,  where the constants $(2/\pi)^{4}$ and
$\frac{8}{45}$ are the best possible.

The Wilker-type inequalities \eqref{Wilker-1} and \eqref{Wilker-II} have
attracted much interest of many mathematicians and have motivated a
large number of research papers involving different proofs, various
generalizations and improvements (cf.
\cite{Baricz397--406,Chen-Cheung,Chen-CheungJIA,Chen-Sandor55--67,Guo19--22,Mortitc535--541,Mortitc516--520,Neuman399--407,Neuman271--279,Neuman211--217,Neuman715--723,Pinelis905--909,
Wu683--687,Wu447--458,Wu529--535,Wu757--765,Zhang149-151,Zhu749--750,ZhuID4858422,Zhu1998--2004,ZhuID130821}
and the references cited therein).

A related inequality that is of interest to us is Huygens'
inequality \cite{Huygens}, which asserts that
\begin{equation}\label{Huygens}
2\left(\frac{\sin x}{x}\right)+\frac{\tan x}{x}>3,\qquad 0<|x|<\frac{\pi}{2}.
\end{equation}
Wu and Srivastava \cite[Lemma 3]{Wu529--535} established another
inequality
\begin{equation}\label{Wilker-2}
\left(\frac{x}{\sin x}\right)^{2}+\frac{x}{\tan x}>2, \qquad 0<|x|<\frac{\pi}{2}.
\end{equation}
Neuman and S\'andor \cite[Theorem
2.3]{Neuman715--723} proved that for $0<|x|<\pi/2$,
\begin{align}\label{NeumanSandorThm2.3}
\frac{\sin x}{x}<\frac{2+\cos x}{3}<\frac{1}{2}\left(\frac{x}{\sin
x}+\cos x\right).
\end{align}
By multiplying both sides of  inequality \eqref{NeumanSandorThm2.3}
by $x/\sin x$, we obtain that for $0<|x|<\pi/2$,
\begin{align}\label{Cusa-re}
\frac{1}{2}\left[\left(\frac{x}{\sin x}\right)^{2}+\frac{x}{\tan
x}\right]>\frac{2(x/\sin x)+x/\tan x}{3}>1.
\end{align}

 Chen and S\'andor \cite{Chen-Sandor55--67} established the following inequality
 chain:
\begin{align}\label{LWsuggest}
&\quad\frac{\left(\sin x/x\right)^{2}+\tan x/x}{2}>\left(\frac{\sin
x}{x}\right)^{2}\left(\frac{\tan x}{x}\right)>\frac{2\left(\sin
x/x\right)+\tan x/x}{3}\notag\\
&> \left(\frac{\sin x}{x}\right)^{2/3}\left(\frac{\tan
x}{x}\right)^{1/3}>\frac{1}{2}\left[\left(\frac{x}{\sin
x}\right)^{2}+\frac{x}{\tan x}\right]>\frac{2(x/\sin x)+x/\tan
x}{3}>1
\end{align}
for $0<|x|<\pi/2$.

In analogy with \eqref{Wilker-II}, Chen and Cheung \cite{Chen-CheungJIA} established sharp Wilker and Huygens-type inequalities. For example, these authors proved that
for $0<x<\pi/2$,
\begin{equation}\label{Wilker-generalization-ineq-1}
2+\frac{8}{45}x^4+\frac{16}{315}x^5\tan x<\left(\frac{\sin
x}{x}\right)^{2}+\frac{\tan
x}{x}<2+\frac{8}{45}x^4+\left(\frac{2}{\pi}\right)^{6}x^5\tan x,
\end{equation}
where the constants $\frac{16}{315}$ and $\left(2/\pi\right)^{6}$
are best possible,
\begin{align}\label{secondWilker-1}
\left(\frac{x}{\sin x}\right)^{2}+\frac{x}{\tan x}<2+\frac{2}{45}
x^3\tan x,
\end{align}
where the constant $\frac{2}{45}$ is best possible, and
\begin{equation}\label{Huygens-generalization-ineq-1}
3+\frac{3}{20}x^3\tan x<2\left(\frac{\sin x}{x}\right)+\frac{\tan
x}{x}<3+\left(\frac{2}{\pi}\right)^{4}x^3\tan x,
\end{equation}
where the constants $\frac{3}{20}$ and $\left(2/\pi\right)^{4}$
are best possible.

In view of \eqref{Wilker-generalization-ineq-1}, \eqref{secondWilker-1} and \eqref{Huygens-generalization-ineq-1},
Chen and Cheung \cite{Chen-CheungJIA} posed the following conjectures.
\begin{Conjecture}\label{WilkerInequality-Conjecture}
For $0<x<\pi/2$ and $n\geq3$,
\begin{align*}
&2+\sum_{k=3}^{n}\frac{\big(2(2^{2k}-1)|B_{2k}|-(-1)^{k}\big)2^{2k-1}}{(2k)!}x^{2k-2}\\
&\quad +\frac{\big(2(2^{2n+2}-1)|B_{2n+2}|-(-1)^{n+1}\big)2^{2n+1}}{(2n+2)!}x^{2n-1}\tan x\\
&\qquad<\left(\frac{\sin x}{x}\right)^{2}+\frac{\tan x}{x}\\
&\qquad<2+\sum_{k=3}^{n}\frac{\big(2(2^{2k}-1)|B_{2k}|-(-1)^{k}\big)2^{2k-1}}{(2k)!}x^{2k-2}+\left(\frac{2}{\pi}\right)^{2n}x^{2n-1}\tan x,
\end{align*}
where $B_{n}$ $(n\in\mathbb{N}_0, \mathbb{N}_0=\mathbb{N}\cup\{0\},  \mathbb{N}:=\{1, 2, \ldots\})$ are  the Bernoulli numbers, defined by
\begin{equation*}
\begin{split}
\frac{t}{e^{t}-1}=\sum_{n=0}^{\infty}B_{n}\frac{t^{n}}{n!},\qquad |t|<2\pi.
\end{split}
\end{equation*}
\end{Conjecture}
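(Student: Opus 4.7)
The plan is to reduce the conjectured double inequality to the monotonicity of a single auxiliary ratio, and then invoke a classical principle for quotients of power series. To set the stage, I would first derive the Maclaurin series of $(\sin x/x)^{2}+\tan x/x$: writing $(\sin x/x)^{2}=(1-\cos 2x)/(2x^{2})$ and combining with the standard expansion $\tan x=\sum_{k=1}^{\infty}2^{2k}(2^{2k}-1)|B_{2k}|\,x^{2k-1}/(2k)!$, one obtains
\[
\left(\frac{\sin x}{x}\right)^{2}+\frac{\tan x}{x}=\sum_{k=1}^{\infty}a_{k}x^{2k-2},\qquad a_{k}:=\frac{\bigl(2(2^{2k}-1)|B_{2k}|-(-1)^{k}\bigr)\,2^{2k-1}}{(2k)!},
\]
with $a_{1}=2$, $a_{2}=0$, and $a_{k}>0$ for $k\geq 3$ (the last following from the asymptotic $|B_{2k}|\sim 2(2k)!/(2\pi)^{2k}$). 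These are precisely the coefficients that appear in the conjecture.

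I would then introduce
\[
F_{n}(x):=\frac{(\sin x/x)^{2}+\tan x/x-2-\sum_{k=3}^{n}a_{k}x^{2k-2}}{x^{2n-1}\tan x},\qquad 0<x<\frac{\pi}{2},
\]
so that the conjectured double inequality is equivalent to $a_{n+1}<F_{n}(x)<(2/\pi)^{2n}$ on $(0,\pi/2)$. The two boundary values fall out immediately: the numerator of $F_{n}$ equals $\sum_{k\geq n+1}a_{k}x^{2k-2}$ with leading term $a_{n+1}x^{2n}$, while the denominator behaves like $x^{2n}$ at the origin, giving $F_{n}(0^{+})=a_{n+1}$. At the right endpoint, dividing both the numerator and the denominator of $F_n$ by $\tan x$ and letting $x\to(\pi/2)^{-}$: the numerator-over-$\tan x$ tends to $2/\pi$ (only the term $\tan x/x$ survives), while the denominator-over-$\tan x$ equals $x^{2n-1}\to(\pi/2)^{2n-1}$, hence $F_{n}((\pi/2)^{-})=(2/\pi)^{2n}$. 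It suffices, therefore, to prove that $F_{n}$ is strictly monotone between these two limits.

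To establish monotonicity I would substitute $\tan x=\sum_{j\geq 1}b_{j}x^{2j-1}$ with $b_{j}=2^{2j}(2^{2j}-1)|B_{2j}|/(2j)!$ into both numerator and denominator of $F_{n}$; after cancelling a common factor $x^{2n}$ this gives
\[
F_{n}(x)=\frac{\sum_{j=1}^{\infty}a_{n+j}\,x^{2j-2}}{\sum_{j=1}^{\infty}b_{j}\,x^{2j-2}},
\]
a quotient of two power series in $x^{2}$ with strictly positive coefficients. By the Biernacki--Krzy\.z monotonicity lemma for ratios of power series, $F_{n}$ is then strictly increasing on $(0,\pi/2)$ provided the sequence $c_{j}:=a_{n+j}/b_{j}$ is strictly increasing in $j$. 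Note that $b_{1}=1$, so $c_{1}=a_{n+1}$, consistent with the boundary value at $0$.

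The main obstacle is thus the purely arithmetic claim that for each fixed $n\geq 3$, the sequence $\{a_{n+j}/b_{j}\}_{j\geq 1}$ strictly increases. Using $|B_{2m}|=2\zeta(2m)(2m)!/(2\pi)^{2m}$ together with $1\leq\zeta(2m)<\pi^{2}/6$, one can rewrite $c_{j+1}/c_{j}$ entirely in terms of $\zeta$-values and elementary factors in $j$ and $n$; the large-$j$ regime is then controlled easily by asymptotics, but the finite range of small $j$ (depending on $n$) is the delicate technical heart of the proof. I would handle it either by an explicit induction on $j$, or by recasting the inequality $c_{j+1}>c_{j}$ as a positivity statement on the coefficients of an auxiliary combination of $\sin$, $\cos$ and $\tan$ obtained by differentiating $F_{n}$ directly and clearing denominators.
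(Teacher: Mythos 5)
Your reduction is sound: the limits $F_n(0^+)=a_{n+1}$ and $F_n((\pi/2)^-)=(2/\pi)^{2n}$ are computed correctly, and the Biernacki--Krzy\.z lemma would indeed deliver both sides of the conjectured inequality at once if the sequence $c_j=a_{n+j}/b_j$ were shown to be strictly increasing. But that monotonicity is the entire content of the problem, and you have not proved it --- you only name two strategies you ``would'' pursue. This is a genuine gap, and not a routine one. Writing $a_k=b_k+(-1)^{k-1}2^{2k-1}/(2k)!$, the main part $b_{n+j}/b_j$ of $c_j$ can be shown to increase in $j$ (e.g.\ via log-convexity of $\lambda(2k)=\zeta(2k)(1-2^{-2k})$ in $k$), but the sign-alternating correction $(-1)^{n+j-1}2^{2n+2j-1}/\bigl((2n+2j)!\,b_j\bigr)$ is \emph{not} negligible against the increments $c_{j+1}-c_j$ for small $j$: already at $n=3$ the increment $c_3-c_2$ and the correction at $j=2$ are both about $4\times10^{-4}$. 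So $c_{j+1}>c_j$ genuinely requires quantitative Bernoulli-number estimates that you have not supplied, and your asymptotic remark only controls the large-$j$ regime, which is the easy part.

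It is also worth noting that full monotonicity of $c_j$ is strictly more than is needed, and the paper never proves it. Instead it splits the double inequality into two one-sided statements and establishes only the term-by-term comparisons $a_{n+j}>a_{n+1}b_j$ for $j\ge 2$ (the $j=1$ terms cancel since $b_1=1$) and $a_{n+j}<(2/\pi)^{2n}b_j$, i.e.\ $c_1<c_j<\lim_{i\to\infty}c_i$ rather than $c_1<c_2<c_3<\cdots$. These weaker inequalities are extracted from the two-sided bound $2/\bigl((2\pi)^{2m}(1-2^{-2m})\bigr)<|B_{2m}|/(2m)!<2/\bigl((2\pi)^{2m}(1-2^{1-2m})\bigr)$ combined with the monotonicity of a few explicit auxiliary sequences, together with (for the upper bound when $n$ is odd) the alternating-series bound for $\cos 2x$ and the Chen--Qi lemma on the tangent remainder. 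If you pursue your plan, you must either prove the stronger monotonicity of $c_j$ in full detail or retreat to these endpoint comparisons, which already suffice for the conjecture.
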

\begin{Conjecture}\label{second-Wilker-Conjecture}
For $0<x<\pi/2$ and $n\geq1$,
\begin{align*}
\left(\frac{x}{\sin x}\right)^{2}+\frac{x}{\tan x}
&<2+\sum_{k=2}^{n}\frac{(k-1)\cdot2^{2k+1}|B_{2k}|}{(2k)!}x^{2k}+\frac{n\cdot2^{2n+3}|B_{2(n+1)|}}{(2n+2)!}x^{2n+1}\tan x.
\end{align*}
Here, and throughout this paper, an empty sum is understood to be zero.
\end{Conjecture}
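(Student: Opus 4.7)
The plan is to convert the conjectured inequality into a coefficient-by-coefficient comparison of two explicit Maclaurin series. First I would invoke the classical Bernoulli expansions
\begin{equation*}
x\cot x = 1 - \sum_{k=1}^{\infty}\frac{2^{2k}|B_{2k}|}{(2k)!}\,x^{2k},\qquad x^{2}\csc^{2}x = 1 + \sum_{k=1}^{\infty}\frac{(2k-1)\,2^{2k}|B_{2k}|}{(2k)!}\,x^{2k},
\end{equation*}
the second arising by differentiating the first. Adding them and collecting terms gives
\begin{equation*}
\left(\frac{x}{\sin x}\right)^{2}+\frac{x}{\tan x}-2 = \sum_{k=2}^{\infty} a_{k}\,x^{2k},\qquad a_{k}:=\frac{(k-1)\cdot 2^{2k+1}|B_{2k}|}{(2k)!},
\end{equation*}
with the $k=1$ term cancelling. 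Setting $b_{n}:=\frac{n\cdot 2^{2n+3}|B_{2n+2}|}{(2n+2)!}$, one checks $b_{n}=a_{n+1}$, so Conjecture \ref{second-Wilker-Conjecture} is equivalent to
\begin{equation*}
\sum_{k=n+1}^{\infty}a_{k}\,x^{2k} < b_{n}\,x^{2n+1}\tan x,\qquad 0<x<\pi/2.
\end{equation*}

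Next I would expand the right side using $\tan x = \sum_{j\geq 1} c_{j}\,x^{2j-1}$ with $c_{j}:=\frac{2^{2j}(2^{2j}-1)|B_{2j}|}{(2j)!}$. This reduces the problem to the termwise inequality $a_{n+m}\leq b_{n}c_{m}$ for every $m\geq 1$, with strict inequality for at least one $m$. A first check shows that $c_{1}=1$, so equality holds at $m=1$ exactly because $b_{n}$ was designed as $a_{n+1}$; strictness must therefore come from $m\geq 2$. Using the identity $|B_{2k}|/(2k)! = 2\zeta(2k)/(2\pi)^{2k}$, the desired coefficient inequality is equivalent to
\begin{equation*}
(n+m-1)\,\pi^{2}\,\zeta(2n+2m) < 2n\,(2^{2m}-1)\,\zeta(2n+2)\,\zeta(2m),\qquad m\geq 2,\ n\geq 1.
\end{equation*}

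To establish this, I would use the monotonicity $\zeta(2n+2m)\leq\zeta(2n+2)$ together with $\zeta(2m)\geq 1$ to reduce matters to the elementary estimate $(n+m-1)\pi^{2}\leq 2n(2^{2m}-1)$. Since $2^{2m}-1\geq 3\cdot 4^{m-1}$ for $m\geq 1$, it suffices to verify $(n+m-1)\pi^{2}\leq 6n\cdot 4^{m-1}$ for $m\geq 2$ and $n\geq 1$. The base case $m=2$ becomes $1+\tfrac{1}{n}\leq 24/\pi^{2}$, valid for all $n\geq 1$, and induction on $m$ closes the argument because multiplying the right side by $4$ easily absorbs the additive increment $\pi^{2}$ on the left (as $18n\cdot 4^{m-1}\geq \pi^{2}$).

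The principal obstacle is the Bernoulli/zeta coefficient inequality above. Although the final elementary bound is straightforward, the danger lies in applying the estimates $\zeta(2n+2m)\leq\zeta(2n+2)$ and $\zeta(2m)\geq 1$ too generously: the tightest case is $m=2$, $n=1$, which precisely recovers the already established sharp inequality \eqref{secondWilker-1} and so serves as a useful sanity check that the plan is sound. Once the termwise inequality is secured, the strict conclusion on $(0,\pi/2)$ follows automatically because the power series $\sum_{m\geq 1}(b_{n}c_{m}-a_{n+m})\,x^{2n+2m}$ has nonnegative coefficients with at least one strictly positive and converges absolutely on that interval.
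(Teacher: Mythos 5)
Your proposal is correct, but note that this paper does not actually prove Conjecture \ref{second-Wilker-Conjecture}: the authors state that it was established in the separate work \cite{Chen-Paris-submissionII} and devote the present paper to Conjectures \ref{WilkerInequality-Conjecture} and \ref{Huygens-Inequality-Conjecture}. So there is no in-paper proof to compare against; the fair comparison is with the proofs of Theorems \ref{ThmWilker-Conjecture} and \ref{Thm-Huygens-Inequality-Conjecture}, which follow the same template you use --- expand the left side in a Bernoulli power series, multiply the $x^{2n+1}\tan x$ term out using \eqref{tan-series}, observe that the $k=n+1$ coefficient cancels exactly (your $b_n=a_{n+1}$, $c_1=1$), and prove a termwise coefficient inequality for $k\geq n+2$. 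I checked your computations: the expansions of $x\cot x$ and $x^2\csc^2 x$ are right, the sum correctly gives $a_k=(k-1)2^{2k+1}|B_{2k}|/(2k)!$ with the $k=1$ term vanishing, and the reduction to $(n+m-1)\pi^2\zeta(2n+2m)<2n(2^{2m}-1)\zeta(2n+2)\zeta(2m)$ for $m\geq2$ is exact; the monotonicity of $\zeta$ plus $\zeta(2m)>1$ plus the elementary induction on $m$ closes it. Where you genuinely diverge from the paper's toolkit is in using Euler's identity $|B_{2k}|/(2k)!=2\zeta(2k)/(2\pi)^{2k}$ directly rather than the two-sided D'aniello bounds \eqref{eq:1.11}; this buys you a cleaner reduction (no factors $1\pm 2^{-2k}$ to track, no auxiliary decreasing sequences like $a_n\leq a_3$ to verify numerically), and your situation is also simpler than the paper's theorems because all coefficients $a_k$ are positive, so no parity split on $n$ and no lower-bound companion are needed. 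The one cosmetic slip is the remark that the $x^2\csc^2 x$ series arises ``by differentiating the first'': one differentiates the series for $\cot x$, not for $x\cot x$; the stated series is nevertheless correct.
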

\begin{Conjecture}\label{Huygens-Inequality-Conjecture}
For $0<x<\pi/2$ and $n\geq2$,
\begin{align*}
&3+\sum_{k=3}^{n}\left(\frac{2^{2k}(2^{2k}-1)|B_{2k}|}{4k}-(-1)^k\right)\frac{2}{(2k-1)!}x^{2k-2}\notag\\
&\qquad+\left(\frac{2^{2n+2}(2^{2n+2}-1)|B_{2n+2}|}{4(n+1)}-(-1)^{n+1}\right)\frac{2}{(2n+1)!}x^{2n-1}\tan x\notag\\
&\quad<2\left(\frac{\sin x}{x}\right)+\frac{\tan x}{x}\\
&\quad<3+\sum_{k=3}^{n}\left(\frac{2^{2k}(2^{2k}-1)|B_{2k}|}{4k}-(-1)^k\right)\frac{2}{(2k-1)!}x^{2k-2}+\left(\frac{2}{\pi}\right)^{2n}x^{2n-1}\tan
x.
\end{align*}
\end{Conjecture}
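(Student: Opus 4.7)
My strategy is to recast the two-sided inequality as a single monotonicity statement on $(0,\pi/2)$, evaluate the resulting auxiliary function at the two endpoints, and then apply the power-series form of the monotone L'Hopital rule. The crux will be a coefficient-by-coefficient comparison controlled by classical estimates on the Bernoulli numbers.

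I first expand in Taylor series. Using $\sin x/x=\sum_{j\ge 0}(-1)^{j}x^{2j}/(2j+1)!$ together with $\tan x=\sum_{k\ge1}t_{k}x^{2k-1}$, $t_{k}:=2^{2k}(2^{2k}-1)|B_{2k}|/(2k)!$, and the identity $(2k)!=2k(2k-1)!$, a direct calculation yields
\begin{equation*}
2\,\frac{\sin x}{x}+\frac{\tan x}{x}-3=\sum_{k=3}^{\infty}a_{k}\,x^{2k-2},\qquad
a_{k}:=\Bigl(\frac{2^{2k}(2^{2k}-1)|B_{2k}|}{4k}-(-1)^{k}\Bigr)\frac{2}{(2k-1)!};
\end{equation*}
the $k=1,2$ contributions cancel identically, and elementary lower bounds on $|B_{2k}|$ give $a_{k}>0$ for every $k\ge 3$. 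Setting $g_{n}(x):=\sum_{k\ge n+1}a_{k}x^{2k-2}$ and observing that $x^{2n-1}\tan x=\sum_{k\ge n+1}t_{k-n}x^{2k-2}$, the conjecture becomes
\begin{equation*}
a_{n+1}\;<\;F_{n}(x):=\frac{g_{n}(x)}{x^{2n-1}\tan x}\;<\;\Bigl(\frac{2}{\pi}\Bigr)^{\!2n},\qquad 0<x<\pi/2.
\end{equation*}
The two endpoint values of $F_{n}$ then fall out immediately: the leading Taylor terms at $0$ give $F_{n}(0^{+})=a_{n+1}$, while as $x\to(\pi/2)^{-}$ the summand $\tan x/x$ dominates $g_{n}$ (the remaining pieces staying bounded), so $g_{n}(x)\sim\tan x/x$ and therefore $F_{n}(x)\sim x^{-2n}\to(2/\pi)^{2n}$.

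It therefore suffices to show $F_{n}$ is strictly increasing on $(0,\pi/2)$, and this is the main obstacle. Since $F_{n}$ is the ratio of two power series in $y=x^{2}$ with positive coefficients starting at the same order, the monotone-form L'Hopital lemma for power series (Biernacki--Krzy\.z) reduces the task to verifying that the coefficient ratio $a_{k}/t_{k-n}$ is strictly increasing in $k$ for $k\ge n+1$, equivalently
\begin{equation*}
\frac{a_{k+1}}{a_{k}}\;\ge\;\frac{t_{k-n+1}}{t_{k-n}}\qquad(k\ge n+1).
\end{equation*}
I plan to establish this inequality by inserting the classical two-sided bound
\begin{equation*}
\frac{2\,(2k)!}{(2\pi)^{2k}}\;<\;|B_{2k}|\;<\;\frac{2\,(2k)!}{(2\pi)^{2k}}\cdot\frac{1}{1-2^{1-2k}},
\end{equation*}
which pins the Bernoulli ratios tightly from both sides, and then handling the first few $k$ by direct numerical verification (the sign alternation $(-1)^{k}$ inside $a_{k}$ being a genuinely perturbing effect only for small indices). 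Once the coefficient monotonicity is in hand, the monotonicity of $F_{n}$---and with it both halves of the conjecture, together with their sharpness at the two endpoints---follows automatically.
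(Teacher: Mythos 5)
Your reduction is set up correctly: writing $2\sin x/x+\tan x/x-3=\sum_{k\ge3}a_kx^{2k-2}$ with $a_k=t_k+2(-1)^{k-1}/(2k-1)!$, $t_k=2^{2k}(2^{2k}-1)|B_{2k}|/(2k)!$, the two conjectured constants are indeed the endpoint limits $F_n(0^+)=a_{n+1}$ and $F_n((\pi/2)^-)=(2/\pi)^{2n}$ of $F_n(x)=g_n(x)/(x^{2n-1}\tan x)$, so monotonicity of $F_n$ would give both halves at once. This is genuinely different from the paper, which treats the two bounds separately (and for the upper bound splits into odd $n$, handled by the alternating truncation of $\sin$ plus the tangent lemma, and even $n$, handled by coefficient comparison). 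Your route is more unified and would prove the strictly stronger statement that $F_n$ is increasing.

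The gap is in the key step. The Biernacki--Krzy\.{z} lemma asks for $a_{k+1}/a_k\ge t_{j+1}/t_j$ with $j=k-n$, and this comparison is far too tight for D'Aniello's bounds \eqref{eq:1.11}. Writing $t_m=2(2/\pi)^{2m}\lambda(2m)$ with $\lambda(s)=\sum_{i\ge0}(2i+1)^{-s}$, one has $t_{m+1}/t_m=(4/\pi^2)\lambda(2m+2)/\lambda(2m)$, so both ratios in your inequality lie within $O(9^{-j})$ of $4/\pi^2$; but the relative slack in \eqref{eq:1.11} is of order $4^{-m}$, and in fact the D'Aniello lower bound for $t_{k+1}/t_k$ is
\begin{equation*}
\frac{4}{\pi^2}\cdot\frac{1-2\cdot4^{-k}}{1-4^{-k}}<\frac{4}{\pi^2},\qquad\text{while}\qquad \frac{t_{j+1}}{t_j}<\frac{4}{\pi^2}\cdot\frac{1-4^{-j-1}}{1-\tfrac12\cdot4^{-j}}
\end{equation*}
has its right-hand side \emph{above} $4/\pi^2$. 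The two ranges overlap for every $j,k$, so \eqref{eq:1.11} can never decide the comparison, and the delicate cases are not a finite list (e.g.\ for $n=2$ the margin at $k=6$ is about $6\times10^{-5}$ and at $k=8$ about $6\times10^{-7}$). This is precisely why the paper does \emph{not} prove monotonicity of the coefficient ratios $c_k=a_k/t_{k-n}$: it only compares each $c_k$ against the two extreme values $a_{n+1}$ and $(2/\pi)^{2n}$, which leaves a margin of order $1$ that \eqref{eq:1.11} can absorb. To rescue your plan you would need second-order information, e.g.\ that $m\mapsto t_{m+1}/t_m$ is increasing (equivalently, log-convexity of $\lambda(2m)$, which follows from Cauchy--Schwarz applied to the Dirichlet series) together with quantitative bounds $\lambda(2m+2)/\lambda(2m)\le1-(8/9)3^{-2m}+O(25^{-m})$ to beat the perturbation $|\epsilon_k|\approx(\pi/2)^{2k}/((2k-1)!\,\lambda(2k))$ coming from the $(-1)^{k-1}$ term; only that perturbation is confined to finitely many $k$. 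As written, the proposal's engine does not run.
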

 Recently, Chen and Paris \cite{Chen-Paris-submissionII} proved Conjecture \ref{second-Wilker-Conjecture}. This paper is a continuation of our earlier work \cite{Chen-Paris-submissionII}.
The first aim of the present paper is to prove Conjectures \ref{WilkerInequality-Conjecture} and \ref{Huygens-Inequality-Conjecture}.

Mortici \cite[Theorem 1]{Mortitc516--520} presented  the following double inequality:
\begin{align}\label{Wilker-Inequality-Mortici}
2+\left(\frac{8}{45}-\frac{8}{945}x^2\right)x^{3}\tan x&<\left(\frac{\sin x}{x}\right)^{2}+\frac{\tan x}{x}\nonumber\\
&<2+\left(\frac{8}{45}-\frac{8}{945}x^2+\frac{16}{14175}x^{4}\right)x^{3}\tan x, \qquad 0<x<1.
\end{align}
By using Maple software, we find that
\begin{align}\label{Wilker-Inequality-Mortici-series}
\frac{\left(\frac{\sin x}{x}\right)^{2}+\frac{\tan x}{x}-2}{x^{3}\tan x}&=\frac{8}{45}-\frac{8}{945}x^2+\frac{16}{14175}x^{4}+\frac{8}{467775} x^6+\frac{3184}{638512875} x^8\nonumber\\
&\quad+\frac{272}{638512875} x^{10}+\frac{7264}{162820783125} x^{12}+\cdots.
\end{align}
This fact   led us to claim that the upper bound in \eqref{Wilker-Inequality-Mortici} should be the lower bound.
  The second aim of the present paper is to  prove the following inequality:
\begin{align}\label{Wilker-Mortici-Inequality}
&2+\left(\frac{8}{45}-\frac{8}{945}x^2+\frac{16}{14175}x^{4}\right)x^{3}\tan x<\left(\frac{\sin x}{x}\right)^{2}+\frac{\tan x}{x}\nonumber\\
&\quad<2+\left(\frac{8}{45}-\frac{8}{945}x^2+\frac{241920-2688\pi^4+32\pi^6}{945\pi^8}x^{4}\right)x^{3}\tan x,\qquad 0<x<\frac{\pi}{2},
\end{align}
where the constants $\frac{16}{14175}$ and $(241920-2688\pi^4+32\pi^6)/(945\pi^8)$ are the best possible.
\begin{remark}
The inequalities \eqref{Wilker-Mortici-Inequality} are sharper than the inequalities \eqref{Wilker-II} and \eqref{Wilker-generalization-ineq-1}.
\end{remark}
In analogy with  \eqref{Wilker-Mortici-Inequality},  we here determine the best possible constants
$\alpha, \beta, \lambda, \mu, \rho$, and $\varrho$ such that
\begin{align*}
2+\left(\frac{2}{45}-\frac{2}{315}x^2-\alpha x^{4}\right)x^{3}\tan x&<\left(\frac{x}{\sin x}\right)^{2}+\frac{x}{\tan x}<2+\left(\frac{2}{45}-\frac{2}{315}x^2-\beta x^{4}\right)x^{3}\tan x,
\end{align*}
\begin{align*}
3+\left(\frac{3}{20}+\frac{1}{280}x^2+\lambda x^{4}\right)x^{3}\tan x&<2\left(\frac{\sin x}{x}\right)+\frac{\tan x}{x}<3+\left(\frac{3}{20}+\frac{1}{280}x^2+\mu x^{4}\right)x^{3}\tan x
\end{align*}
and
\begin{align*}
3+\left(\frac{1}{60}-\frac{1}{280}x^2-\rho  x^{4}\right)x^{3}\tan x&<2\left(\frac{x}{\sin x}\right)+\frac{x}{\tan x}<3+\left(\frac{1}{60}-\frac{1}{280}x^2-\varrho  x^{4}\right)x^{3}\tan x
\end{align*}
for $0<x<\pi/2$. This is the last aim of the present paper.

\vskip 8mm

\section{\bf A useful lemma}
It is well known that
 \begin{align}\label{tan-series}
\tan x=\sum_{n=1}^\infty\frac{2^{2n}(2^{2n}-1)|B_{2n}|}{(2n)!}x^{2n-1},\qquad |x|<\frac{\pi}2,
\end{align}
By using induction, Chen and Qi \cite{Chen-Qi} (see also \cite{Zhao499--506}) proved the following
 \begin{lemma}
 Let $n\geq1$ be an integer. Then for $0<x<\pi/2$, we have
\begin{align}\label{mainr}
\frac{2^{2n+2}(2^{2n+2}-1){|B_{2n+2}|}}{(2n+2)!}x^{2n}\tan x
&<\tan x-\sum_{k=1}^{n}\frac{2^{2k}(2^{2k}-1)|B_{2k}|}{(2k)!}x^{2k-1}\nonumber\\
&<\left(\frac2\pi\right)^{2n}x^{2n}\tan x,
\end{align}
where the the constants
\begin{equation*}
\frac{2^{2n+2}(2^{2n+2}-1){|B_{2n+2}|}}{(2n+2)!}\quad\text{and}\quad \left(\frac2\pi\right)^{2n}
\end{equation*}
are the best possible.
\end{lemma}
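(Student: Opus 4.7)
My plan is to prove the lemma in one stroke by producing a single monotone auxiliary function whose two endpoint limits automatically give both bounds and their sharpness. The key step is to work from the Mittag--Leffler partial-fraction expansion of $\tan x$ rather than from the Taylor series \eqref{tan-series}. Put $u_m = (2m+1)\pi/2$, so $u_0=\pi/2$. The classical identity
$$\tan x = \sum_{m=0}^{\infty}\frac{2x}{u_m^2 - x^2}, \qquad |x|<u_0,$$
combined with the geometric expansion $1/(u_m^2-x^2) = u_m^{-2}\sum_{j\geq 0}(x/u_m)^{2j}$ and comparison with \eqref{tan-series}, identifies the Taylor coefficients of $\tan x$ as $c_k := \frac{2^{2k}(2^{2k}-1)|B_{2k}|}{(2k)!} = 2\sum_{m\geq 0}u_m^{-2k}$ (in particular $c_1=1$). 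Truncating the inner geometric sum at index $k=n$ gives the compact representation
$$R_n(x) := \tan x - \sum_{k=1}^{n}c_k\, x^{2k-1} = 2\,x^{2n+1}\sum_{m=0}^{\infty}\frac{u_m^{-2n}}{u_m^2 - x^2}.$$

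Putting $w_m(x) = 1/(u_m^2 - x^2)$, the quantity central to \eqref{mainr} becomes
$$\psi_n(x) := \frac{R_n(x)}{x^{2n}\tan x} = \frac{\sum_{m\geq 0} u_m^{-2n}\, w_m(x)}{\sum_{m\geq 0} w_m(x)},$$
which is a weighted average of the sequence $(u_m^{-2n})_{m\geq 0}$ with respect to the probability measure $p_m(x) := w_m(x)/\sum_j w_j(x)$; the inequalities of the lemma are precisely $c_{n+1} < \psi_n(x) < (2/\pi)^{2n}$. The two endpoint limits are immediate: as $x\to 0^+$, $w_m(x)\to u_m^{-2}$, so
$$\psi_n(0^+) = \frac{\sum_m u_m^{-2n-2}}{\sum_m u_m^{-2}} = \frac{c_{n+1}}{c_1} = c_{n+1};$$
as $x\to (\pi/2)^-$, $w_0(x)\to\infty$ while every other $w_m(x)$ remains bounded, whence $\psi_n(x)\to u_0^{-2n} = (2/\pi)^{2n}$. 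These two limits pin down the optimal constants, and both strict inequalities will follow once I prove that $\psi_n$ is strictly increasing on $(0,\pi/2)$.

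Monotonicity is the only step requiring a genuine argument, but it reduces to a single observation. For any pair of indices $j<m$ the likelihood ratio
$$\frac{w_j(x)}{w_m(x)} = \frac{u_m^2 - x^2}{u_j^2 - x^2}$$
is strictly increasing in $x\in(0,u_0)$, since its derivative with respect to $x^2$ equals $(u_m^2-u_j^2)/(u_j^2-x^2)^2>0$. The most efficient way to convert this into monotonicity of $\psi_n$ is the bilinear identity
$$\psi_n(x_2)-\psi_n(x_1) = \frac{\sum_{j<m}\bigl(u_j^{-2n}-u_m^{-2n}\bigr)\bigl(w_j(x_2)w_m(x_1)-w_j(x_1)w_m(x_2)\bigr)}{\bigl(\sum_j w_j(x_1)\bigr)\bigl(\sum_j w_j(x_2)\bigr)},$$
valid for $0<x_1<x_2<\pi/2$; in each summand on the right, $u_j^{-2n}-u_m^{-2n}>0$ because $u_j<u_m$, and $w_j(x_2)w_m(x_1)-w_j(x_1)w_m(x_2)>0$ by the monotonicity of $w_j/w_m$ in $x$ just verified. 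The main obstacle I anticipate is this bilinear bookkeeping; once it is written out, the proof is complete, and everything else (power-series manipulations and endpoint limits) is entirely routine.
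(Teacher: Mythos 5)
Your argument is correct, and it is genuinely different from what the paper does: the paper gives no proof of this lemma at all, but simply cites Chen and Qi, whose proof proceeds by induction on $n$ (separately for each of the two bounds). Your route instead starts from the Mittag--Leffler expansion $\tan x=\sum_{m\ge0}2x/(u_m^2-x^2)$ with $u_m=(2m+1)\pi/2$, which correctly identifies the Taylor coefficients as $c_k=2\sum_m u_m^{-2k}$ and collapses the remainder to $R_n(x)=2x^{2n+1}\sum_m u_m^{-2n}/(u_m^2-x^2)$, so that the ratio $\psi_n(x)=R_n(x)/(x^{2n}\tan x)$ becomes a weighted average of the decreasing sequence $(u_m^{-2n})$. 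The likelihood-ratio/Chebyshev-type bilinear identity you invoke does establish strict monotonicity of $\psi_n$ (the pairing of $(j,m)$ with $(m,j)$ is the standard computation, and absolute convergence justifies the rearrangement), and the two endpoint limits $\psi_n(0^+)=c_{n+1}$ and $\psi_n((\pi/2)^-)=(2/\pi)^{2n}$ then deliver both inequalities \emph{and} the optimality of both constants in one stroke. What your approach buys is a unified, structural proof in which sharpness is automatic rather than checked separately, at the cost of importing the partial-fraction expansion; the induction proof of Chen--Qi is more elementary in its prerequisites but handles the two bounds and the sharpness claims as separate verifications. The only points worth making explicit in a final write-up are the routine justifications for interchanging the limits $x\to0^+$ and $x\to(\pi/2)^-$ with the infinite sums (dominated convergence, using $w_m(x)\le 1/(u_m^2-u_0^2)$ for $m\ge1$) and for the term-by-term rearrangements, all of which are immediate from positivity.
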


\vskip 4mm

\section{\bf Main results}

\begin{theorem}\label{ThmWilker-Conjecture}
For $0<x<\pi/2$ and $n\geq3$, we have
\begin{align}\label{Wilker-Inequality-Conjecture}
&\left(\frac{(-1)^{n}2^{2n+1}}{(2n+2)!}+\frac{2^{2n+2}(2^{2n+2}-1)|B_{2n+2}|}{(2n+2)!}\right)x^{2n-1}\tan x\nonumber\\
&\quad<\left(\frac{\sin x}{x}\right)^{2}+\frac{\tan x}{x}-\left\{2+\sum_{k=3}^{n}\left(\frac{(-1)^{k-1}2^{2k-1}}{(2k)!}+\frac{2^{2k}(2^{2k}-1)|B_{2k}|}{(2k)!}\right)x^{2k-2}\right\}\nonumber\\
&\quad<\left(\frac{2}{\pi}\right)^{2n}x^{2n-1}\tan x.
\end{align}
\end{theorem}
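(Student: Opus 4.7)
My plan is to decompose the central quantity of \eqref{Wilker-Inequality-Conjecture} as a sum of two Taylor remainders and then handle each piece with a different tool. Using $\sin^{2}x=(1-\cos 2x)/2$ together with \eqref{tan-series}, expand
\[
\left(\frac{\sin x}{x}\right)^{\!2}=\sum_{k=1}^{\infty}a_{k}\,x^{2k-2},\qquad \frac{\tan x}{x}=\sum_{k=1}^{\infty}b_{k}\,x^{2k-2},
\]
where $a_{k}=(-1)^{k-1}2^{2k-1}/(2k)!$ and $b_{k}=2^{2k}(2^{2k}-1)|B_{2k}|/(2k)!$. A short computation verifies $a_{1}+b_{1}=2$, $a_{2}+b_{2}=0$, and that for $k\ge 3$ the sum $a_{k}+b_{k}$ coincides with the combined coefficient sitting inside the braces in \eqref{Wilker-Inequality-Conjecture}. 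Hence the central quantity equals $U_{n}(x)+T_{n}(x)$, where $U_{n}$ and $T_{n}$ are the Taylor remainders after $n$ terms of $(\sin x/x)^{2}$ and $\tan x/x$, respectively.

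For $T_{n}(x)$, dividing the Lemma's inequality \eqref{mainr} by $x$ gives immediately
\[
b_{n+1}\,x^{2n-1}\tan x \;<\; T_{n}(x) \;<\; \left(\tfrac{2}{\pi}\right)^{\!2n}\,x^{2n-1}\tan x.
\]
For $U_{n}(x)$, observe that on $(0,\pi/2)$ the tail series of $(\sin x/x)^{2}$ is alternating with monotonically decreasing absolute term values (the consecutive ratio is $4x^{2}/[(2k+1)(2k+2)]<1$ for $k\ge n+1\ge 4$). Leibniz's criterion then gives that $U_{n}(x)$ has the sign of $(-1)^{n}$ and lies strictly between $0$ and $a_{n+1}\,x^{2n}$ in magnitude.

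Combining these, the theorem's upper bound is equivalent to $U_{n}(x)<(2/\pi)^{2n}x^{2n-1}\tan x-T_{n}(x)$, and its lower bound (using $c_{n+1}=a_{n+1}+b_{n+1}$) to $[U_{n}(x)-a_{n+1}x^{2n-1}\tan x]+[T_{n}(x)-b_{n+1}x^{2n-1}\tan x]>0$. In each formulation the ``$T$-part'' has the correct sign by the Lemma, so when $n$ is odd the argument closes at once: $U_{n}(x)<0$ handles the upper bound, while $a_{n+1}<0$ combined with $\tan x>x$ gives $U_{n}(x)\ge a_{n+1}x^{2n}>a_{n+1}x^{2n-1}\tan x$ for the lower.

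The principal obstacle is the case of even $n$, when $U_{n}(x)>0$ pushes the wrong way for both inequalities and the naive combination fails. To overcome this, I would sharpen the estimate on $T_{n}$ by applying the Lemma again with index $n+1$: writing $T_{n}(x)=b_{n+1}x^{2n}+T_{n+1}(x)$ and inserting the sharp bounds on $T_{n+1}$ yields refined one-sided estimates of the slack in each direction. Combined with the Leibniz bound $|U_{n}(x)|\le |a_{n+1}|x^{2n}$, the problem reduces to an elementary trigonometric inequality whose positivity is driven by the structural fact $(2/\pi)^{2n}>c_{n+1}$, namely the gap between the two endpoint limits of the ratio $D_{n}(x)/(x^{2n-1}\tan x)$ at $0$ and $\pi/2$. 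Carrying out this refined analysis uniformly in $n$ is the most delicate portion of the argument.
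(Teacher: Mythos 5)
Your decomposition of the central quantity into the two tails $U_n(x)$ (of $(\sin x/x)^2$) and $T_n(x)$ (of $\tan x/x$), the Leibniz estimate $0<(-1)^nU_n(x)<|a_{n+1}|x^{2n}$, and the direct application of \eqref{mainr} to $T_n$ do yield a complete and clean proof when $n$ is odd; this coincides with the paper's treatment of the upper bound in that case, and your $\tan x>x$ trick disposes of the lower bound there more directly than the paper does. The gap is the even case, which you yourself flag as the delicate part but do not carry out, and the repair you sketch provably fails for the upper bound. Writing $T_n(x)=b_{n+1}x^{2n}+T_{n+1}(x)$ and applying \eqref{mainr} at index $n+1$ gives at best
\begin{equation*}
U_n(x)+T_n(x)<\big(a_{n+1}+b_{n+1}\big)x^{2n}+\left(\frac{2}{\pi}\right)^{2n+2}x^{2n+1}\tan x ,
\end{equation*}
and comparing the right-hand side with the target $(2/\pi)^{2n}x^{2n-1}\tan x$ as $x\to(\pi/2)^-$ (where $(\pi/2-x)\tan x\to1$) forces the requirement $a_{n+1}+b_{n+1}\le 2(2/\pi)^{2n+2}$. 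That is false: the lower bound in \eqref{eq:1.11} already gives $b_{n+1}>2(2/\pi)^{2n+2}$, and $a_{n+1}>0$ for even $n$. The failure persists even if you discard $U_n$ entirely, so no sharpening of the Leibniz estimate rescues this route; the loss is created by replacing $T_n$ with $b_{n+1}x^{2n}+T_{n+1}$ near the endpoint, where the bound $(2/\pi)^{2n+2}x^{2n+1}\tan x$ on $T_{n+1}$ is itself tight.

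The paper avoids the endpoint issue altogether by never comparing functions near $x=\pi/2$: it expands $x^{2n-1}\tan x$ as a power series via \eqref{tan-series} and proves each inequality coefficient by coefficient (claims \eqref{Wilker-Inequality-Conjecture-claim} and \eqref{Wilker-conjecture-rightnclaim}), which then reduces, via \eqref{eq:1.11} and the monotonicity of two explicit sequences of Bernoulli-number ratios, to a single numerical check at the base index. Some device of this kind, or another genuinely new idea for absorbing the positive tail $U_n$ when $n$ is even, is needed; as written, your argument establishes the theorem only for odd $n\ge3$.
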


\begin{proof}
First of all, we prove the first inequality in \eqref{Wilker-Inequality-Conjecture}.
By using the power series expansions for $\cos x$  and $\tan x$, we have
\begin{align*}
\left(\frac{\sin x}{x}\right)^{2}+\frac{\tan
x}{x}&=\frac{1}{2x^2}-\frac{1}{2x^2}\cos(2x)+\frac{\tan x}{x}\\
&=2+\sum_{k=3}^{n}\left(\frac{(-1)^{k-1}2^{2k-1}}{(2k)!}+\frac{2^{2k}(2^{2k}-1)|B_{2k}|}{(2k)!}\right)x^{2k-2}+r_n(x),
\end{align*}
where
\begin{align*}
r_n(x)=\sum_{k=n+1}^{\infty}\left(\frac{(-1)^{k-1}2^{2k-1}}{(2k)!}+\frac{2^{2k}(2^{2k}-1)|B_{2k}|}{(2k)!}\right)x^{2k-2}.
\end{align*}

The first inequality in \eqref{Wilker-Inequality-Conjecture} is equivalent to
\begin{align*}
\left(\frac{(-1)^{n}2^{2n+1}}{(2n+2)!}+\frac{2^{2n+2}(2^{2n+2}-1)|B_{2n+2}|}{(2n+2)!}\right)x^{2n-1}\tan x<r_n(x)
\end{align*}
for $0<x<\pi/2$ and $n\geq3$, which can be written by \eqref{tan-series} as
\begin{align*}
&\sum_{k=n+2}^{\infty}\Bigg\{\left(\frac{(-1)^{n}2^{2n+1}}{(2n+2)!}+\frac{2^{2n+2}(2^{2n+2}-1)|B_{2n+2}|}{(2n+2)!}\right)\frac{2^{2k-2n}(2^{2k-2n}-1)|B_{2k-2n}|}{(2k-2n)!}\nonumber\\
&\qquad\qquad-\left(\frac{(-1)^{k-1}2^{2k-1}}{(2k)!}+\frac{2^{2k}(2^{2k}-1)|B_{2k}|}{(2k)!}\right)\Bigg\}x^{2k-2}<0,
\end{align*}
where we note that the term corresponding to $k=n+1$ vanishes.

We claim that  for $k\geq n+2$,
\begin{align}\label{Wilker-Inequality-Conjecture-claim}
&\left(\frac{(-1)^{n}2^{2n+1}}{(2n+2)!}+\frac{2^{2n+2}(2^{2n+2}-1)|B_{2n+2}|}{(2n+2)!}\right)\frac{2^{2k-2n}(2^{2k-2n}-1)|B_{2k-2n}|}{(2k-2n)!}\nonumber\\
&\qquad\qquad<\frac{(-1)^{k-1}2^{2k-1}}{(2k)!}+\frac{2^{2k}(2^{2k}-1)|B_{2k}|}{(2k)!},
\end{align}
It is enough to prove the following inequality:
\begin{align*}
&\left(\frac{2^{2n+1}}{(2n+2)!}+\frac{2^{2n+2}(2^{2n+2}-1)|B_{2n+2}|}{(2n+2)!}\right)\frac{2^{2k-2n}(2^{2k-2n}-1)|B_{2k-2n}|}{(2k-2n)!}\nonumber\\
&\qquad\qquad<-\frac{2^{2k-1}}{(2k)!}+\frac{2^{2k}(2^{2k}-1)|B_{2k}|}{(2k)!},\qquad  k\geq n+2.
\end{align*}

Using the following inequality (see \cite{Daniello329--332}):
 \begin{eqnarray}\label{eq:1.11}
\frac{2}{\left(2\pi\right)^{2n}\left(1-2^{-2n}\right)}<\frac{\left|B_{2n}\right|}{(2n)!}< \frac{2}{\left(2\pi\right)^{2n}\left(1-2^{1-2n}\right)},\qquad
n\geq1,
\end{eqnarray}
it suffices to show that for $k\geq n+2$,
\begin{align*}
&\left(\frac{2^{2n+1}}{(2n+2)!}+\frac{2^{2n+2}(2^{2n+2}-1)2}{\left(2\pi\right)^{2n+2}\left(1-2^{1-2(n+1)}\right)}\right)\frac{2^{2k-2n}(2^{2k-2n}-1)2}{\left(2\pi\right)^{2k-2n}\left(1-2^{1-2(k-n)}\right)}\nonumber\\
&\qquad\qquad<-\frac{2^{2k-1}}{(2k)!}+\frac{2^{2k}(2^{2k}-1)2}{\left(2\pi\right)^{2k}\left(1-2^{-2k}\right)},
\end{align*}
which can be rearranged as
\begin{align*}
&\left(\frac{2^{2n+1}}{(2n+2)!}+\frac{2(2^{2n+2}-1)}{2^{2n+2}-2}\left(\frac{2}{\pi}\right)^{2n+2}\right)\frac{2^{2k-2n}-1}{2^{2k-2n}-2}\left(\frac{2}{\pi}\right)^{2k-2n}+\frac{2^{2k-2}}{(2k)!}<\left(\frac{2}{\pi}\right)^{2k},
\end{align*}
\begin{align*}
&\left(\frac{2^{2n+1}}{(2n+2)!}+2\left(1+\frac{1}{2^{2n+2}-2}\right)\left(\frac{2}{\pi}\right)^{2n+2}\right)\left(1+\frac{1}{2^{2k-2n}-2}\right)\left(\frac{\pi}{2}\right)^{2n}\\
&\qquad\qquad\qquad+\frac{2^{2k-2}}{(2k)!}\left(\frac{\pi}{2}\right)^{2k}<1,\qquad  k\geq n+2.
\end{align*}
Noting that the sequences
\begin{align*}
 1+\frac{1}{2^{2k-2n}-2}  \quad \text{and}\quad \frac{2^{2k-2}}{(2k)!}\left(\frac{\pi}{2}\right)^{2k}
\end{align*}
are both strictly decreasing for $k\geq n+2$, it suffices to show that
\begin{align*}
&\left(\frac{2^{2n+1}}{(2n+2)!}+2\left(1+\frac{1}{2^{2n+2}-2}\right)\left(\frac{2}{\pi}\right)^{2n+2}\right)\frac{15}{14}\left(\frac{\pi}{2}\right)^{2n}+\frac{2^{2n+2}}{(2n+4)!}\left(\frac{\pi}{2}\right)^{2n+4}<1
\end{align*}
for $n\geq3$, which can be rearranged as
\begin{align*}
\frac{2^{2n+1}}{(2n+2)!}\left(\frac{\pi}{2}\right)^{2n+2}+\frac{1}{2^{2n+1}-1}+\frac{2^{2n+2}}{(2n+4)!}\left(\frac{\pi}{2}\right)^{2n+6}\left(\frac{14}{15}\right)<\frac{14}{15}\left(\frac{\pi}{2}\right)^{2}-2,\qquad n\geq3.
\end{align*}
Noting that the sequence
\begin{align*}
a_n:=\frac{2^{2n+1}}{(2n+2)!}\left(\frac{\pi}{2}\right)^{2n+2}+\frac{1}{2^{2n+1}-1}+\frac{2^{2n+2}}{(2n+4)!}\left(\frac{\pi}{2}\right)^{2n+6}\left(\frac{14}{15}\right),\qquad n\geq3
\end{align*}
is strictly decreasing, we see that
\begin{align*}
a_n\leq a_3=\frac{1}{127}+\frac{\pi^8}{80640}+\frac{\pi^{12}}{62208000}<\frac{14}{15}\left(\frac{\pi}{2}\right)^{2}-2
\end{align*}
holds true for $n\geq3$, since
\begin{align*}
\frac{1}{127}+\frac{\pi^8}{80640}+\frac{\pi^{12}}{62208000}=0.14039705\ldots,\quad \frac{14}{15}\left(\frac{\pi}{2}\right)^{2}-2=0.30290769\ldots\ .
\end{align*}
This proves the claim \eqref{Wilker-Inequality-Conjecture-claim}. Hence, the first inequality in \eqref{Wilker-Inequality-Conjecture} holds for $0<x<\pi/2$ and $n\geq3$.

Secondly, we prove the second inequality in \eqref{Wilker-Inequality-Conjecture}.
We consider two cases.

\textit{Case 1.}  $n=2N+1$ ($N\geq1$).

It is well known that for $x\not=0$,
\begin{align*}
\sum_{k=1}^{2N}(-1)^{k-1}\frac{x^{2k-2}}{(2k-2)!}<\cos x<\sum_{k=1}^{2N+1}(-1)^{k-1}\frac{x^{2k-2}}{(2k-2)!}.
\end{align*}
We then obtain that
\begin{align}\label{cos2x-series-inequality}
\left(\frac{\sin x}{x}\right)^{2}&=\frac{1}{2x^2}-\frac{1}{2x^2}\cos(2x)<\sum_{k=1}^{2N+1}(-1)^{k-1}\frac{2^{2k-1}}{(2k)!}x^{2k-2}.
\end{align}
The choice $n=2N+1$ in \eqref{mainr}, we obtain from the right-hand  inequality of \eqref{mainr} that
\begin{equation}\label{mainrRight-n-odd}
\frac{\tan x}{x}<\sum_{k=1}^{2N+1}\frac{2^{2k}(2^{2k}-1)|B_{2k}|}{(2k)!}x^{2k-2}+\left(\frac2\pi\right)^{4N+2}x^{4N+1}\tan x.
\end{equation}
Adding these two expressions, we obtain
\begin{align*}
\left(\frac{\sin x}{x}\right)^{2}+\frac{\tan x}{x}&<2+\sum_{k=3}^{2N+1}\left(\frac{(-1)^{k-1}2^{2k-1}}{(2k)!}+\frac{2^{2k}(2^{2k}-1)|B_{2k}|}{(2k)!}\right)x^{2k-2}\\
&\quad+\left(\frac2\pi\right)^{4N+2}x^{4N+1}\tan x.
\end{align*}
This shows that the second inequality in \eqref{Wilker-Inequality-Conjecture} holds for $n=2N+1$.

\textit{Case 2.}  $n=2N$ ($N\geq2$).

Write
\begin{align*}
\left(\frac{\sin x}{x}\right)^{2}+\frac{\tan x}{x}&=2+\sum_{k=3}^{2N}\left(\frac{(-1)^{k-1}2^{2k-1}}{(2k)!}+\frac{2^{2k}(2^{2k}-1)|B_{2k}|}{(2k)!}\right)x^{2k-2}\\
&\quad+\sum_{k=2N+1}^{\infty}\left(\frac{(-1)^{k-1}2^{2k-1}}{(2k)!}+\frac{2^{2k}(2^{2k}-1)|B_{2k}|}{(2k)!}\right)x^{2k-2}.
\end{align*}
We need to prove
\begin{align}\label{Wilker-conjecture-rightn=2N}
\sum_{k=2N+1}^{\infty}\left(\frac{(-1)^{k-1}2^{2k-1}}{(2k)!}+\frac{2^{2k}(2^{2k}-1)|B_{2k}|}{(2k)!}\right)x^{2k-2}<\left(\frac{2}{\pi}\right)^{4N}x^{4N-1}\tan x.
\end{align}
 Noting that \eqref{tan-series} holds,  we can rewrite \eqref{Wilker-conjecture-rightn=2N} as
\begin{align*}
&\sum_{k=2N+1}^{\infty}\Bigg\{\frac{(-1)^{k-1}2^{2k-1}}{(2k)!}+\frac{2^{2k}(2^{2k}-1)|B_{2k}|}{(2k)!}\\
&\qquad\qquad-\left(\frac{2}{\pi}\right)^{4N}\frac{2^{2k-4N}(2^{2k-4N}-1)|B_{2k-4N}|}{(2k-4N)!}\Bigg\}x^{2k-2}<0.
\end{align*}

We claim that  for $k\geq 2N+1$,
\begin{align}\label{Wilker-conjecture-rightnclaim}
\frac{(-1)^{k-1}2^{2k-1}}{(2k)!}+\frac{2^{2k}(2^{2k}-1)|B_{2k}|}{(2k)!}<\left(\frac{2}{\pi}\right)^{4N}\frac{2^{2k-4N}(2^{2k-4N}-1)|B_{2k-4N}|}{(2k-4N)!}.
\end{align}
It is enough to prove the following inequality:
\begin{align}\label{Wilker-conjecture-rightn-claimproof}
\frac{2^{2k-1}}{(2k)!}+\frac{2^{2k}(2^{2k}-1)|B_{2k}|}{(2k)!}<\left(\frac{2}{\pi}\right)^{4N}\frac{2^{2k-4N}(2^{2k-4N}-1)|B_{2k-4N}|}{(2k-4N)!},\quad k\geq 2N+1.
\end{align}

Using \eqref{eq:1.11}, we find that for $k\geq2N+1$,
\begin{align}\label{Monotinic-ratio1}
\frac{\dfrac{2^{2k+2}(2^{2k+2}-1)|B_{2k+2}|}{(2k+2)!}}{\dfrac{2^{2k}(2^{2k}-1)|B_{2k}|}{(2k)!}}<\frac{\dfrac{2^{2k+2}(2^{2k+2}-1)2}{\left(2\pi\right)^{2k+2}\left(1-2^{1-2(k+1)}\right)}}{\dfrac{2^{2k}(2^{2k}-1)2}{\left(2\pi\right)^{2k}\left(1-2^{-2k}\right)}}=\frac{2(4^k-2)(4\cdot4^k-1)}{\pi^2(4^k-1)(2\cdot4^k-1)}<1
\end{align}
and\footnote{The inequality \eqref{Monotinic-ratio2} is proved in the appendix.}
\begin{align}\label{Monotinic-ratio2}
&\dfrac{\dfrac{2^{2k-4N+2}(2^{2k-4N+2}-1)|B_{2k-4N+2}|}{(2k-4N+2)!}}{\dfrac{2^{2k-4N}(2^{2k-4N}-1)|B_{2k-4N}|}{(2k-4N)!}}>\frac{\dfrac{2^{2k-4N+2}(2^{2k-4N+2}-1)2}{\left(2\pi\right)^{2k-4N+2}\left(1-2^{-2(k-2N+1)}\right)}}{ \dfrac{2^{2k-4N}(2^{2k-4N}-1)2}{\left(2\pi\right)^{2k-4N}\left(1-2^{1-2(k-2N)}\right)}}\nonumber\\
&\quad=\frac{16^{k+N+1}-(8\cdot256^N+64^N)4^{k+1}+8\cdot1024^N}{\pi^2(4^k-16^N)(4^{k+1}-16^N)}>1.
\end{align}
Hence, the sequence
\begin{align*}
\frac{2^{2k-1}}{(2k)!}+\frac{2^{2k}(2^{2k}-1)|B_{2k}|}{(2k)!}
\end{align*}
is strictly decreasing, and the sequence
\begin{align*}
\left(\frac{2}{\pi}\right)^{2N}\frac{2^{2k-2N}(2^{2k-2N}-1)|B_{2k-2N}|}{(2k-2N)!}
\end{align*}
is strictly increasing for $k\geq2N+1$. In order to prove  \eqref{Wilker-conjecture-rightn-claimproof}, it suffices to show that for $k\geq2N+1$,
\begin{align}\label{Wilker-conjecture-rightn-claimproofshow}
\frac{2^{4N+1}}{(4N+2)!}+\frac{2^{4N+2}(2^{4N+2}-1)|B_{4N+2}|}{(4N+2)!}<\left(\frac{2}{\pi}\right)^{4N}\frac{2^{2}(2^{2}-1)|B_{2}|}{2!}=\left(\frac{2}{\pi}\right)^{4N}.
\end{align}
By \eqref{eq:1.11}, it suffices to show that
\begin{align*}
\frac{2^{4N+1}}{(4N+2)!}+\frac{2^{4N+2}(2^{4N+2}-1)2}{\left(2\pi\right)^{4N+2}\left(1-2^{1-2(2N+1)}\right)}<\left(\frac{2}{\pi}\right)^{4N},
\end{align*}
\begin{align*}
\frac{2^{4N+1}}{(4N+2)!}+2\left(1+\frac{1}{2^{4N+2}-2}\right)\left(\frac{2}{\pi}\right)^{4N+2}<\left(\frac{2}{\pi}\right)^{4N},
\end{align*}
\begin{align*}
\frac{2^{4N+1}}{(4N+2)!}\left(\frac{\pi}{2}\right)^{4N+2}+\frac{1}{2^{4N+1}-1}<\left(\frac{\pi}{2}\right)^{2}-2,\qquad  N\geq2.
\end{align*}
Noting that the sequence
\begin{align*}
b_N:=\frac{2^{4N+1}}{(4N+2)!}\left(\frac{\pi}{2}\right)^{4N+2}+\frac{1}{2^{4N+1}-1},\qquad  N\geq2
\end{align*}
is strictly decreasing, we see that
\begin{align*}
b_N\leq b_2=\frac{1}{511}+\frac{\pi^{10}}{7257600}<\left(\frac{\pi}{2}\right)^{2}-2
\end{align*}
holds true for $N\geq2$, since
\begin{align*}
\frac{1}{511}+\frac{\pi^{10}}{7257600}=0.0148603\ldots, \quad \left(\frac{\pi}{2}\right)^{2}-2=0.4674011\ldots\ .
\end{align*}
 This proves the claim \eqref{Wilker-conjecture-rightnclaim}. Hence, \eqref{Wilker-conjecture-rightn=2N} holds, which shows that the second inequality in \eqref{Wilker-Inequality-Conjecture} holds for $n=2N$.
Thus, the second inequality in \eqref{Wilker-Inequality-Conjecture} holds for $0<x<\pi/2$ and $n\geq3$. The proof of Theorem \ref{ThmWilker-Conjecture} is complete.
\end{proof}

\begin{theorem}\label{Thm-Huygens-Inequality-Conjecture}
For $0<x<\pi/2$ and $n\geq2$, we have
\begin{align}\label{ThmHuygensInequalityConjecture}
&\left(\frac{2(-1)^{n}}{(2n+1)!}+\frac{2^{2n+2}(2^{2n+2}-1)|B_{2n+2}|}{(2n+2)!}\right)x^{2n-1}\tan x\notag\\
&\quad<2\left(\frac{\sin x}{x}\right)+\frac{\tan x}{x}-\left\{3+\sum_{k=3}^{n}\left(\frac{2(-1)^{k-1}}{(2k-1)!}+\frac{2^{2k}(2^{2k}-1)|B_{2k}|}{(2k)!}\right)x^{2k-2}\right\}\notag\\
&\quad<\left(\frac{2}{\pi}\right)^{2n}x^{2n-1}\tan x.
\end{align}
\end{theorem}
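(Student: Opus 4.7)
The plan is to mirror the structure of Theorem~\ref{ThmWilker-Conjecture} step-by-step. Using $\sin x=\sum_{k\geq 1}(-1)^{k-1}x^{2k-1}/(2k-1)!$ together with \eqref{tan-series}, I first write
\begin{align*}
2\,\frac{\sin x}{x}+\frac{\tan x}{x}=3+\sum_{k=3}^{\infty}c_k\,x^{2k-2},\qquad c_k:=\frac{2(-1)^{k-1}}{(2k-1)!}+\frac{2^{2k}(2^{2k}-1)|B_{2k}|}{(2k)!},
\end{align*}
the $k=2$ term being identically zero. Then \eqref{ThmHuygensInequalityConjecture} is equivalent to two-sided bounds on the tail $r_n(x):=\sum_{k=n+1}^{\infty}c_k\,x^{2k-2}$, and a direct check shows that the coefficient in front of $x^{2n-1}\tan x$ in the lower bound is precisely $c_{n+1}$. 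Consequently, when it is multiplied by the leading term $x$ of \eqref{tan-series}, the resulting $x^{2n}$-contribution cancels the leading term of $r_n(x)$, exactly as in the Wilker proof.

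For the first inequality, this cancellation reduces matters to the term-by-term claim
\begin{align*}
c_{n+1}\cdot\frac{2^{2k-2n}(2^{2k-2n}-1)|B_{2k-2n}|}{(2k-2n)!}<c_k,\qquad k\geq n+2.
\end{align*}
I strengthen this by replacing the alternating signs $(-1)^n$ on the left and $(-1)^{k-1}$ on the right by their worst-case values, then apply the D'Aniello estimates \eqref{eq:1.11} to eliminate the Bernoulli numbers. The resulting elementary inequality has a left-hand side strictly decreasing in $k$ and a right-hand side strictly increasing in $k$ (verified as in \eqref{Monotinic-ratio1}), so it suffices to take $k=n+2$; a further monotonicity in $n$ leaves only the base case $n=2$, which is a short numerical verification modelled on the $a_n$-estimate in Theorem~\ref{ThmWilker-Conjecture}.

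For the second inequality I split on the parity of $n$. If $n=2N+1$ is odd, the truncated alternating bound $\sin x<\sum_{k=1}^{2N+1}(-1)^{k-1}x^{2k-1}/(2k-1)!$ yields an upper estimate for $2\sin x/x$, while the right-hand inequality in \eqref{mainr} with index $2N+1$ yields an upper estimate for $\tan x/x$; summing the two estimates reproduces exactly the braced partial sum together with the error $(2/\pi)^{4N+2}x^{4N+1}\tan x$. If $n=2N$ is even, I need $r_{2N}(x)<(2/\pi)^{4N}x^{4N-1}\tan x$; expanding the right side by \eqref{tan-series} reduces this to the term-by-term claim
\begin{align*}
c_k<\left(\frac{2}{\pi}\right)^{4N}\frac{2^{2k-4N}(2^{2k-4N}-1)|B_{2k-4N}|}{(2k-4N)!},\qquad k\geq 2N+1.
\end{align*}
The same monotonicity analysis as in \eqref{Monotinic-ratio1}--\eqref{Monotinic-ratio2} (LHS decreasing, RHS increasing in $k$) reduces this to $k=2N+1$, and a further monotonicity in $N$ leaves only the base case $N=1$ (i.e.\ $n=2$) to be checked numerically.

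The main obstacle will be these two term-by-term claims: since \eqref{eq:1.11} is only asymptotically sharp, after the substitution one must carry along a small correction term and verify the two-parameter monotonicity (in $k$ and in $n$, respectively in $k$ and in $N$) carefully enough that a single numerical base case suffices. This is precisely the technical difficulty overcome in Theorem~\ref{ThmWilker-Conjecture}, and I expect the margins at $n=2$, $n=3$ and $N=1$ to be numerically comfortable, of the same order as those appearing in the $a_n$- and $b_N$-estimates above.
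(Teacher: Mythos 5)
Your proposal is correct and follows essentially the same route as the paper's own proof: the same series decomposition with the cancelling $k=n+1$ term, the same term-by-term claims handled by worst-casing the signs and applying the D'Aniello bounds \eqref{eq:1.11}, the same parity split using \eqref{sin-series-inequality} and \eqref{mainr} for odd $n$, and the same monotonicity reductions to the numerical base cases $n=2$ and $N=1$. No substantive differences to report.
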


\begin{proof}
First of all, we prove the first inequality in \eqref{ThmHuygensInequalityConjecture}.
By using the power series expansions for $\sin x$  and $\tan x$, we have
\begin{align*}
2\left(\frac{\sin x}{x}\right)+\frac{\tan x}{x}=3+\sum_{k=3}^{n}\left(\frac{2(-1)^{k-1}}{(2k-1)!}+\frac{2^{2k}(2^{2k}-1)|B_{2k}|}{(2k)!}\right)x^{2k-2}+R_n(x),
\end{align*}
where
\begin{align*}
R_n(x)=\sum_{k=n+1}^{\infty}\left(\frac{2(-1)^{k-1}}{(2k-1)!}+\frac{2^{2k}(2^{2k}-1)|B_{2k}|}{(2k)!}\right)x^{2k-2}.
\end{align*}

The first inequality in \eqref{ThmHuygensInequalityConjecture} is equivalent to
\begin{align*}
&\left(\frac{2(-1)^{n}}{(2n+1)!}+\frac{2^{2n+2}(2^{2n+2}-1)|B_{2n+2}|}{(2n+2)!}\right)x^{2n-1}\tan x<R_n(x)
\end{align*}
for $0<x<\pi/2$ and $n\geq2$, which can be written by  \eqref{tan-series} as
\begin{align*}
&\sum_{k=n+2}^{\infty}\Bigg\{\left(\frac{2(-1)^{n}}{(2n+1)!}+\frac{2^{2n+2}(2^{2n+2}-1)|B_{2n+2}|}{(2n+2)!}\right)\frac{2^{2k-2n}(2^{2k-2n}-1)|B_{2k-2n}|}{(2k-2n)!}\nonumber\\
&\qquad\qquad-\left(\frac{2(-1)^{k-1}}{(2k-1)!}+\frac{2^{2k}(2^{2k}-1)|B_{2k}|}{(2k)!}\right)\Bigg\}x^{2k-2}<0,
\end{align*}
where we note that the term corresponding to $k=n+1$ vanishes.

We claim that  for $k\geq n+2$,
\begin{align}\label{ThmHuygensInequalityConjectureProofre}
&\left(\frac{2(-1)^{n}}{(2n+1)!}+\frac{2^{2n+2}(2^{2n+2}-1)|B_{2n+2}|}{(2n+2)!}\right)\frac{2^{2k-2n}(2^{2k-2n}-1)|B_{2k-2n}|}{(2k-2n)!}\nonumber\\
&\qquad\qquad<\frac{2(-1)^{k-1}}{(2k-1)!}+\frac{2^{2k}(2^{2k}-1)|B_{2k}|}{(2k)!}.
\end{align}
It is enough to prove the following inequality:
\begin{align*}
&\left(\frac{2}{(2n+1)!}+\frac{2^{2n+2}(2^{2n+2}-1)|B_{2n+2}|}{(2n+2)!}\right)\frac{2^{2k-2n}(2^{2k-2n}-1)|B_{2k-2n}|}{(2k-2n)!}\nonumber\\
&\qquad\qquad<-\frac{2}{(2k-1)!}+\frac{2^{2k}(2^{2k}-1)|B_{2k}|}{(2k)!},\qquad  k\geq n+2.
\end{align*}
Using \eqref{eq:1.11}, it suffices to show that for $k\geq n+2$,
\begin{align*}
&\left(\frac{2}{(2n+1)!}+\frac{2^{2n+2}(2^{2n+2}-1)2}{\left(2\pi\right)^{2n+2}\left(1-2^{1-2(n+1)}\right)}\right)\frac{2^{2k-2n}(2^{2k-2n}-1)2}{\left(2\pi\right)^{2k-2n}\left(1-2^{1-2(k-n)}\right)}\nonumber\\
&\qquad\qquad<-\frac{2}{(2k-1)!}+\frac{2^{2k}(2^{2k}-1)2}{\left(2\pi\right)^{2k}\left(1-2^{-2k}\right)},
\end{align*}
which can be rearranged as
\begin{align*}
&\left(\frac{1}{(2n+1)!}+\frac{2^{2n+2}-1}{2^{2n+2}-2}\left(\frac{2}{\pi}\right)^{2n+2}\right)\frac{2(2^{2k-2n}-1)}{2^{2k-2n}-2}\left(\frac{2}{\pi}\right)^{2k-2n}+\frac{1}{(2k-1)!}<\left(\frac{2}{\pi}\right)^{2k},
\end{align*}
\begin{align*}
&\left(\frac{1}{(2n+1)!}+\left(1+\frac{1}{2^{2n+2}-2}\right)\left(\frac{2}{\pi}\right)^{2n+2}\right)2\left(1+\frac{1}{2^{2k-2n}-2}\right)\left(\frac{\pi}{2}\right)^{2n}\\
&\qquad\qquad\qquad+\frac{1}{(2k-1)!}\left(\frac{\pi}{2}\right)^{2k}<1,\qquad  k\geq n+2.
\end{align*}
Noting that the sequences
\begin{align*}
   2\left(1+\frac{1}{2^{2k-2n}-2}\right)      \quad \text{and}\quad \frac{1}{(2k-1)!}\left(\frac{\pi}{2}\right)^{2k}
\end{align*}
are both strictly decreasing for $k\geq n+2$, it suffices to show that
\begin{align*}
\left(\frac{1}{(2n+1)!}+\left(1+\frac{1}{2^{2n+2}-2}\right)\left(\frac{2}{\pi}\right)^{2n+2}\right)\frac{15}{7}\left(\frac{\pi}{2}\right)^{2n}+\frac{1}{(2n+3)!}\left(\frac{\pi}{2}\right)^{2n+4}<1
\end{align*}
for $n\geq2$, which can be rearranged as
\begin{align*}
\frac{1}{(2n+1)!}\left(\frac{\pi}{2}\right)^{2n+2}+\frac{1}{2^{2n+2}-2}+\frac{1}{(2n+3)!}\left(\frac{\pi}{2}\right)^{2n+6}\left(\frac{7}{15}\right)<\frac{7}{15}\left(\frac{\pi}{2}\right)^{2}-1,\qquad n\geq2.
\end{align*}
Noting that the sequence
\begin{align*}
x_n:=\frac{1}{(2n+1)!}\left(\frac{\pi}{2}\right)^{2n+2}+\frac{1}{2^{2n+2}-2}+\frac{1}{(2n+3)!}\left(\frac{\pi}{2}\right)^{2n+6}\left(\frac{7}{15}\right),\qquad n\geq2
\end{align*}
is strictly decreasing, we see that
\begin{align*}
&x_n\leq x_2=\frac{1}{62}+\frac{\pi^6}{7680}+\frac{\pi^{10}}{11059200}<\frac{7}{15}\left(\frac{\pi}{2}\right)^{2}-1
\end{align*}
holds true for $n\geq2$, since
\begin{align*}
\frac{1}{62}+\frac{\pi^6}{7680}+\frac{\pi^{10}}{11059200}=0.1497778\ldots,\quad  \frac{7}{15}\left(\frac{\pi}{2}\right)^{2}-1=0.1514538\ldots\ .
\end{align*}
 This proves the claim \eqref{ThmHuygensInequalityConjectureProofre}. Hence, the first inequality in \eqref{ThmHuygensInequalityConjecture} holds for $0<x<\pi/2$ and $n\geq2$.

Secondly, we prove the second inequality in \eqref{ThmHuygensInequalityConjecture}.
We consider two cases.

\textit{Case 1.}  $n=2N+1$ ($N\geq1$).

It is well known that for $x\not=0$,
\begin{align}\label{sin-series-inequality}
\sum_{k=1}^{2N}(-1)^{k-1}\frac{x^{2k-2}}{(2k-1)!}<\frac{\sin x}{x}<\sum_{k=1}^{2N+1}(-1)^{k-1}\frac{x^{2k-2}}{(2k-1)!}.
\end{align}
From the second inequality in  \eqref{sin-series-inequality} and
\eqref{mainrRight-n-odd}, we obtain
\begin{align*}
2\left(\frac{\sin x}{x}\right)+\frac{\tan x}{x}
&<3+\sum_{k=3}^{2N+1}\left(\frac{2(-1)^{k-1}}{(2k-1)!}+\frac{2^{2k}(2^{2k}-1)|B_{2k}|}{(2k)!}\right)x^{2k-2}\\
&\quad+\left(\frac2\pi\right)^{4N+2}x^{4N+1}\tan x.
\end{align*}
This shows that the second inequality in \eqref{ThmHuygensInequalityConjecture} holds for $n=2N+1$.

\textit{Case 2.}  $n=2N$ ($N\geq1$).

Write
\begin{align*}
2\left(\frac{\sin x}{x}\right)+\frac{\tan x}{x}
&=3+\sum_{k=3}^{2N}\left(\frac{2(-1)^{k-1}}{(2k-1)!}+\frac{2^{2k}(2^{2k}-1)|B_{2k}|}{(2k)!}\right)x^{2k-2}\\
&\quad+\sum_{k=2N+1}^{\infty}\left(\frac{2(-1)^{k-1}}{(2k-1)!}+\frac{2^{2k}(2^{2k}-1)|B_{2k}|}{(2k)!}\right)x^{2k-2}.
\end{align*}
We need to prove
\begin{align}\label{Huygens-conjecture-rightn=2N}
\sum_{k=2N+1}^{\infty}\left(\frac{2(-1)^{k-1}}{(2k-1)!}+\frac{2^{2k}(2^{2k}-1)|B_{2k}|}{(2k)!}\right)x^{2k-2}<\left(\frac{2}{\pi}\right)^{4N}x^{4N-1}\tan x.
\end{align}
 Noting that \eqref{tan-series} holds,  we can rewrite \eqref{Huygens-conjecture-rightn=2N} as
\begin{align*}
&\sum_{k=2N+1}^{\infty}\Bigg\{\frac{2(-1)^{k-1}}{(2k-1)!}+\frac{2^{2k}(2^{2k}-1)|B_{2k}|}{(2k)!}\\
&\qquad\qquad-\left(\frac{2}{\pi}\right)^{4N}\frac{2^{2k-4N}(2^{2k-4N}-1)|B_{2k-4N}|}{(2k-4N)!}\Bigg\}x^{2k-2}<0.
\end{align*}

We claim that  for $k\geq 2N+1$,
\begin{align}\label{Huygens-conjecture-rightnclaim}
\frac{2(-1)^{k-1}}{(2k-1)!}+\frac{2^{2k}(2^{2k}-1)|B_{2k}|}{(2k)!}<\left(\frac{2}{\pi}\right)^{4N}\frac{2^{2k-4N}(2^{2k-4N}-1)|B_{2k-4N}|}{(2k-4N)!}.
\end{align}
It is enough to prove the following inequality:
\begin{align}\label{Huygens-conjecture-rightn-claimproof}
\frac{2}{(2k-1)!}+\frac{2^{2k}(2^{2k}-1)|B_{2k}|}{(2k)!}<\left(\frac{2}{\pi}\right)^{4N}\frac{2^{2k-4N}(2^{2k-4N}-1)|B_{2k-4N}|}{(2k-4N)!},\quad k\geq 2N+1.
\end{align}

By \eqref{Monotinic-ratio1} and \eqref{Monotinic-ratio2}, we see that the sequence
\begin{align*}
\frac{2}{(2k-1)!}+\frac{2^{2k}(2^{2k}-1)|B_{2k}|}{(2k)!}
\end{align*}
is strictly decreasing, and the sequence
\begin{align*}
\left(\frac{2}{\pi}\right)^{2N}\frac{2^{2k-2N}(2^{2k-2N}-1)|B_{2k-2N}|}{(2k-2N)!}
\end{align*}
is strictly increasing for $k\geq2N+1$. In order to prove  \eqref{Huygens-conjecture-rightn-claimproof}, it suffices to show that for $k\geq2N+1$,
\begin{align}\label{Huygens-conjecture-rightn-claimProofsuffices}
\frac{2}{(4N+1)!}+\frac{2^{4N+2}(2^{4N+2}-1)|B_{4N+2}|}{(4N+2)!}<\left(\frac{2}{\pi}\right)^{4N}\frac{2^{2}(2^{2}-1)|B_{2}|}{2!}=\left(\frac{2}{\pi}\right)^{4N}.
\end{align}
By \eqref{eq:1.11}, it  now suffices to show that
\begin{align*}
\frac{2}{(4N+1)!}+\frac{2^{4N+2}(2^{4N+2}-1)2}{\left(2\pi\right)^{4N+2}\left(1-2^{1-2(2N+1)}\right)}<\left(\frac{2}{\pi}\right)^{4N},
\end{align*}
which can be rearranged as
\begin{align*}
\left(\frac{2}{\pi}\right)^{4N+2}\frac{1}{(4N+1)!}+\frac{1}{2^{4N+2}-2}<\frac{\pi^2}{8}-1,\qquad N\geq1.
\end{align*}
Noting that the sequence
\begin{align*}
y_N:=\left(\frac{2}{\pi}\right)^{4N+2}\frac{1}{(4N+1)!}+\frac{1}{2^{4N+2}-2},\qquad N\geq1
\end{align*}
is strictly decreasing, we see that
\begin{align*}
y_N
\leq y_1=\frac{\pi^4}{48384}<\frac{\pi^2}{8}-1
\end{align*}
holds true for $N\geq1$, since
\begin{align*}
\frac{\pi^4}{48384}=0.00201325\ldots,\quad \frac{\pi^2}{8}-1=0.23370055\ldots\ .
\end{align*}
 This proves the claim \eqref{Huygens-conjecture-rightnclaim}. Hence, \eqref{Huygens-conjecture-rightn=2N} holds, which shows that the second inequality in \eqref{ThmHuygensInequalityConjecture} holds for $n=2N$.
Thus, the second inequality in \eqref{ThmHuygensInequalityConjecture} holds for $0<x<\pi/2$ and $n\geq2$. The proof of Theorem \ref{Thm-Huygens-Inequality-Conjecture} is complete.
\end{proof}

\begin{theorem}\label{Wilker-Inequality-ChenConjecture}
For $0<x<\pi/2$, we have
\begin{align}\label{Wilker-Inequality-ChenCorrection}
2+\left(\frac{8}{45}-\frac{8}{945}x^2+ax^{4}\right)x^{3}\tan x&<\left(\frac{\sin x}{x}\right)^{2}+\frac{\tan x}{x}\nonumber\\
&<2+\left(\frac{8}{45}-\frac{8}{945}x^2+bx^{4}\right)x^{3}\tan x
\end{align}
with the best possible constants
\begin{align}\label{Wilker-Inequality-ChenCorrectionBestConstants}
a=\frac{16}{14175}=0.001128\ldots \quad\text{and}  \quad  b=\frac{241920-2688\pi^4+32\pi^6}{945\pi^8}=0.001209\ldots.
\end{align}
\end{theorem}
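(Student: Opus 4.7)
The plan is to reduce the sharp double inequality to a single monotonicity statement. I introduce the auxiliary function
\[
\Phi(x) = \frac{\left(\frac{\sin x}{x}\right)^2 + \frac{\tan x}{x} - 2 - \left(\frac{8}{45} - \frac{8}{945}x^2\right) x^3 \tan x}{x^7 \tan x}, \qquad 0 < x < \frac{\pi}{2},
\]
so that \eqref{Wilker-Inequality-ChenCorrection} is equivalent to the two-sided bound $a < \Phi(x) < b$ on $(0,\pi/2)$. An elementary algebraic rearrangement (splitting the $\tan x/x$ term off from the rest) yields the workable form
\[
\Phi(x) = \frac{(\sin x/x)^2 - 2}{x^7 \tan x} + \frac{1}{x^8} - \frac{8}{45\, x^4} + \frac{8}{945\, x^2}.
\]

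From the Maple expansion \eqref{Wilker-Inequality-Mortici-series}, $\Phi$ is analytic at the origin with
\[
\Phi(x) = \frac{16}{14175} + \frac{8}{467775}\,x^2 + \frac{3184}{638512875}\,x^4 + \frac{272}{638512875}\,x^6 + \cdots,
\]
so $\Phi(0^+) = 16/14175 = a$. As $x \to (\pi/2)^-$ the first fraction in the rearranged form of $\Phi$ vanishes (bounded numerator, $x^7\tan x \to +\infty$), and the remaining rational part at $x=\pi/2$ equals $\tfrac{256}{\pi^8} - \tfrac{128}{45\pi^4} + \tfrac{32}{945\pi^2}$; multiplying through by $945\pi^8$ gives exactly $241920 - 2688\pi^4 + 32\pi^6$, so $\Phi((\pi/2)^-) = b$. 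These two one-sided limits deliver the "best possible" part of the theorem as soon as the two-sided bound itself has been established.

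The main step of the proof is then to show that every Taylor coefficient of $\Phi$ at $x=0$ is strictly positive. This forces $\Phi$ to be strictly increasing on $(0,\pi/2)$, and combined with the two boundary limits gives $a < \Phi(x) < b$. Equivalently, after multiplying through by the positive factor $x^7\tan x$, the task reduces to showing that for every $k \geq 6$ the coefficient of $x^{2k-2}$ in the Taylor expansion of
\[
\left(\frac{\sin x}{x}\right)^2 + \frac{\tan x}{x} - 2 - \left(\frac{8}{45} - \frac{8}{945}x^2 + \frac{16}{14175}x^4\right) x^3 \tan x
\]
is strictly positive; the coefficients for $k=3,4,5$ cancel identically, which is precisely the algebraic content that singles out $16/14175$ as the sharp lower constant.

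The main obstacle is carrying out this positivity verification rigorously. Using \eqref{tan-series}, each such coefficient is an explicit linear combination of the Bernoulli-number quantities $\tfrac{2^{2j}(2^{2j}-1)|B_{2j}|}{(2j)!}$ for $j=k-2,k-3,k-4$ together with the dominant term $\tfrac{(-1)^{k-1}2^{2k-1}}{(2k)!} + \tfrac{2^{2k}(2^{2k}-1)|B_{2k}|}{(2k)!}$ coming from $(\sin x/x)^2+\tan x/x-2$. Paralleling the strategy employed in the proof of Theorem~\ref{ThmWilker-Conjecture}, I plan to apply the Daniello bound \eqref{eq:1.11} to both sides to isolate the leading Bernoulli contribution, reduce the required inequality to a clean geometric-type estimate valid for all sufficiently large $k$, and verify a short list of small-$k$ cases by direct numerical computation. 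The principal added complication compared with Theorem~\ref{ThmWilker-Conjecture} is that three (rather than one) polynomial coefficients are subtracted, so the Bernoulli bookkeeping now involves three convolutional terms that must be controlled simultaneously.
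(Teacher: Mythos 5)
Your overall architecture is the same as the paper's: the paper's auxiliary function $f$ is exactly your $\Phi$, it computes the same two endpoint limits (which you evaluate correctly, including the identification of $b$), and it deduces \eqref{Wilker-Inequality-ChenCorrection} from strict monotonicity of this function on $(0,\pi/2)$. The gap is in your monotonicity step. You claim that positivity of all Taylor coefficients of $\Phi$ is \emph{equivalent}, ``after multiplying through by the positive factor $x^{7}\tan x$,'' to positivity of the coefficients of
\[
N(x)=\left(\frac{\sin x}{x}\right)^{2}+\frac{\tan x}{x}-2-\left(\frac{8}{45}-\frac{8}{945}x^{2}+\frac{16}{14175}x^{4}\right)x^{3}\tan x .
\]
That is not an equivalence. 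Since $x^{7}\tan x$ has nonnegative Taylor coefficients, coefficientwise positivity of $\Phi-a$ implies that of $N=(\Phi-a)\cdot x^{7}\tan x$, but the direction you need is the converse, and it fails in general: recovering $\Phi-a$ from $N$ means multiplying by $x\cot x=1-\sum_{n\ge1}\frac{2^{2n}|B_{2n}|}{(2n)!}x^{2n}$, whose nonconstant coefficients are all negative. Positivity of the coefficients of $N$ does yield the \emph{lower} bound (it says $N>0$, i.e.\ $\Phi>a$), and your Daniello-type plan \eqref{eq:1.11} for those coefficients is plausible though not executed. But it yields nothing toward the \emph{upper} bound $\Phi<b$: for that you need monotonicity of $\Phi$ itself, and your reduction has silently replaced the coefficients of $\Phi$ (which involve the cotangent expansion, via $\Phi(x)=\frac{\tfrac12\sin(2x)-2x^{2}\cot x}{x^{9}}+\frac{1}{x^{8}}-\frac{8}{45x^{4}}+\frac{8}{945x^{2}}$) by the coefficients of $N$, which are a different, easier object.

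The paper closes precisely this step by a different device: it differentiates $f$, writes $f'(x)=g(x)/(945x^{10}\sin^{2}x)$, converts $g$ by product-to-sum identities into $\frac{16}{495}x^{13}+\frac{496}{61425}x^{15}-\frac{64}{26325}x^{17}+\sum_{n\ge9}(-1)^{n-1}u_{n}(x)$, and shows $n\mapsto u_{n}(x)$ is decreasing, so the alternating tail is nonnegative and $f'>0$. To repair your argument you would either have to prove positivity of the genuine Taylor coefficients of $\Phi$ (still explicit Bernoulli combinations, and legitimate for concluding monotonicity on all of $(0,\pi/2)$ since the series of $\Phi$ has radius of convergence $\pi$, the poles of $\tan$ at $\pm\pi/2$ being removable in $\Phi$), or switch to a derivative-based argument as the paper does.
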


\begin{proof}
The inequality \eqref{Wilker-Inequality-ChenCorrection} can be written as
\begin{align*}
a<f(x)<b,
\end{align*}
where
\begin{align*}
f(x)=\frac{1}{x^4}\left(\frac{\left(\frac{\sin x}{x}\right)^{2}+\frac{\tan x}{x}-2}{x^{3}\tan x}-\left(\frac{8}{45}-\frac{8}{945}x^2\right)\right).
\end{align*}
Direct computations yield
\begin{align*}
\lim_{x\to0}f(x)=\frac{16}{14175} \quad\text{and}  \quad  \lim_{x\to\pi/2}f(x)=\frac{241920-2688\pi^4+32\pi^6}{945\pi^8}.
\end{align*}
In order to prove  \eqref{Wilker-Inequality-ChenCorrection}, it suffices to show that $f(x)$ is strictly increasing on $(0, \pi/2)$.

Differentiation yields
\begin{align*}
f'(x)=\frac{g(x)}{945x^{10}\sin^{2}x},
\end{align*}
with
\begin{align*}
g(x)&=6615x^2\sin(2x)-8505\sin^{3}x\cos x-8505x+1890x^3\\
&\quad+10395x\cos^{2}x-1890x\cos^{4}x+672x^5\sin^{2}x-16x^7\sin^{2}x\\
&=6615x^2\sin(2x)-8505\left(\frac{1}{4}\sin (2x)-\frac{1}{8}\sin(4x)\right)-8505x+1890x^3\\
&\quad+\frac{10395}{2}x\Big(1+\cos(2x)\Big)-1890x\left(\frac{1}{8}\cos(4x)+\frac{1}{2}\cos(2x)+\frac{3}{8}\right)\\
&\quad+336x^5\Big(1-\cos(2x)\Big)-8x^7\Big(1-\cos(2x)\Big)\\
&=\frac{16}{495}x^{13}+\frac{496}{61425}x^{15}-\frac{64}{26325}x^{17}+\sum_{n=9}^{\infty}(-1)^{n-1}u_{n}(x),
\end{align*}
where
\begin{align*}
u_n(x)&=\Big(945n\cdot2^{2n-1}-16065\cdot2^{2n-2}+16n^7-112n^6+952n^5\\
&\qquad\qquad-1960n^4+889n^3+13727n^2-2172n\Big)\frac{2^{2n}x^{2n+1}}{(2n+1)!},\qquad n\geq9.
\end{align*}

Direct computation yields
\begin{align*}
\frac{u_{n+1}(x)}{u_n(x)}=\frac{8x^2p_n}{q_n},
\end{align*}
where
\begin{align*}
p_n=(1890n-16065)4^n+16n^7+616n^5+1680n^4+889n^3+12810n^2+24309n+11340
\end{align*}
and
\begin{align*}
q_n&=(n+1)(2n+3)\Big((1890n-16065)4^n+64n^7-448n^6+3808n^5\\
&\qquad\qquad\qquad\qquad\qquad-7840n^4+3556n^3+54908n^2-8688n\Big).
\end{align*}
Noting that $8(\pi/2)^2<20$, we find that for $0<x<\pi/2$ and $n\geq9$,
\begin{align*}
\frac{u_{n+1}(x)}{u_n(x)}<\frac{8(\pi/2)^2p_n}{q_n}<\frac{20p_n}{q_n}<1,
\end{align*}
since\footnote{The inequality \eqref{Proof-since-Wilker-nequality} is proved in the appendix.}
\begin{align}\label{Proof-since-Wilker-nequality}
q_n-20p_n>0 \quad \text{for}\quad n\geq9.
\end{align}
Therefore, for fixed $x\in(0, \pi/2)$, the sequence $n\mapsto u_n(x)$ is strictly decreasing for $n\geq9$. Hence, for $0 <x <\pi/2$,
\begin{align*}
g(x)>\frac{16}{495}x^{13}+\frac{496}{61425}x^{15}-\frac{64}{26325}x^{17}=\frac{16}{495}x^{13}+\frac{16x^{15}(93-28x^2)}{184275}>0.
\end{align*}
We then obtain that $f'(x)>0$  for $0 <x <\pi/2$. The proof of Theorem \ref{Wilker-Inequality-ChenConjecture} is complete.
\end{proof}
 Following the same method used in the proof of Theorem \ref{Wilker-Inequality-ChenConjecture},
we can prove the following theorem.
\begin{theorem}\label{Wilker-Inequality-ChenThm2}
For $0<x<\pi/2$, we have
\begin{align}\label{Wilker-Chen-typeInequality}
2+\left(\frac{2}{45}-\frac{2}{315}x^2-\alpha x^{4}\right)x^{3}\tan x&<\left(\frac{x}{\sin x}\right)^{2}+\frac{x}{\tan x}\nonumber\\
&<2+\left(\frac{2}{45}-\frac{2}{315}x^2-\beta x^{4}\right)x^{3}\tan x,
\end{align}
\begin{align}\label{Huygens-Chen-typeInequality}
3+\left(\frac{3}{20}+\frac{1}{280}x^2+\lambda x^{4}\right)x^{3}\tan x&<2\left(\frac{\sin x}{x}\right)+\frac{\tan x}{x}\nonumber\\
&<3+\left(\frac{3}{20}+\frac{1}{280}x^2+\mu x^{4}\right)x^{3}\tan x
\end{align}
and
\begin{align}\label{HuygensType-Chen-typeInequality}
3+\left(\frac{1}{60}-\frac{1}{280}x^2-\rho  x^{4}\right)x^{3}\tan x&<2\left(\frac{x}{\sin x}\right)+\frac{x}{\tan x}\nonumber\\
&<3+\left(\frac{1}{60}-\frac{1}{280}x^2-\varrho  x^{4}\right)x^{3}\tan x,
\end{align}
with the best possible constants
\begin{align}\label{Wilker-Chen-typeInequalityBest}
\alpha=\frac{224-8\pi^2}{315\pi^4}=0.004727\ldots, \quad \beta=\frac{4}{1575}=0.002539\ldots,
\end{align}
\begin{align}\label{Huygens-Chen-typeInequalityBest}
\lambda=\frac{23}{33600}=0.000684\ldots, \quad  \mu=\frac{17920-168\pi^4-\pi^6}{70\pi^8}=0.000894\ldots
\end{align}
and
\begin{align}\label{Huygens-Chen-typeInequalityBestconstants}
\rho=\frac{56-3\pi^2}{210\pi^4}=0.0012901\ldots, \quad \varrho=\frac{83}{100800}=0.0008234\ldots.
\end{align}
\end{theorem}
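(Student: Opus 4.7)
The plan is to apply, three times, the same method used in the proof of Theorem \ref{Wilker-Inequality-ChenConjecture}. Introduce the auxiliary functions
\begin{align*}
f_1(x) &= \frac{1}{x^4}\left[\frac{2}{45} - \frac{2}{315}x^2 - \frac{(x/\sin x)^2 + x/\tan x - 2}{x^3 \tan x}\right],\\
f_2(x) &= \frac{1}{x^4}\left[\frac{2\sin x/x + \tan x/x - 3}{x^3 \tan x} - \frac{3}{20} - \frac{1}{280}x^2\right],\\
f_3(x) &= \frac{1}{x^4}\left[\frac{1}{60} - \frac{1}{280}x^2 - \frac{2x/\sin x + x/\tan x - 3}{x^3 \tan x}\right].
\end{align*}
A rearrangement turns \eqref{Wilker-Chen-typeInequality}, \eqref{Huygens-Chen-typeInequality} and \eqref{HuygensType-Chen-typeInequality} respectively into the equivalent two-sided bounds $\beta < f_1(x) < \alpha$, $\lambda < f_2(x) < \mu$ and $\varrho < f_3(x) < \rho$ on $(0,\pi/2)$.

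First I would confirm that the six best constants in \eqref{Wilker-Chen-typeInequalityBest}--\eqref{Huygens-Chen-typeInequalityBestconstants} are precisely $f_i(0^+)$ and $f_i(\pi/2^-)$. Inserting the Maclaurin series of $(x/\sin x)^2 + x/\tan x - 2$, $2\sin x/x + \tan x/x - 3$ and $2x/\sin x + x/\tan x - 3$, dividing by the series of $x^3\tan x$, and subtracting the polynomial part gives
\[
\lim_{x\to 0^+} f_1(x) = \tfrac{4}{1575} = \beta,\quad \lim_{x\to 0^+} f_2(x) = \tfrac{23}{33600} = \lambda,\quad \lim_{x\to 0^+} f_3(x) = \tfrac{83}{100800} = \varrho.
\]
Direct evaluation at $x=\pi/2$, where $\sin(\pi/2)=1$ causes the two factors of $\tan x$ to cancel in each quotient, produces the right-endpoint values $\alpha$, $\mu$ and $\rho$. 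The theorem then reduces to showing that each $f_i$ is strictly increasing on $(0,\pi/2)$.

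For the monotonicity, I differentiate to write $f_i'(x) = g_i(x)/D_i(x)$ with $D_i(x)$ an explicitly positive denominator (a positive power of $x$ multiplied by an even power of $\sin x$), so only the sign of the trigonometric polynomial $g_i(x)$ matters. Following the model of Theorem \ref{Wilker-Inequality-ChenConjecture}, I would use product-to-sum identities to express $g_i(x)$ as a polynomial in $x$ plus a combination of $\sin(2x)$, $\cos(2x)$, $\sin(4x)$, $\cos(4x)$ with polynomial-in-$x$ coefficients, and then substitute their Maclaurin series to arrive at
\[
g_i(x) = P_i(x) + \sum_{n\ge n_i} (-1)^{n-1} u_n^{(i)}(x),
\]
where $P_i(x)$ is a finite sum of explicitly nonnegative monomials on $(0,\pi/2)$ and $u_n^{(i)}(x) = R_n^{(i)}\cdot 2^{2n} x^{2n+1}/(2n+1)!$ with $R_n^{(i)}$ polynomial in $n$ whose dominant part is an exponential term of the form $n\cdot 2^{2n-1}$.

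The core technical step is then to show that for each fixed $x\in(0,\pi/2)$ the sequence $\{u_n^{(i)}(x)\}_{n\ge n_i}$ is strictly decreasing. Computing the ratio $u_{n+1}^{(i)}/u_n^{(i)}$ and using the crude bound $8(\pi/2)^2 < 20$ in the spirit of \eqref{Proof-since-Wilker-nequality} reduces this to a polynomial-in-$n$ inequality $q_n^{(i)} - 20\,p_n^{(i)} > 0$ for $n\ge n_i$; the alternating-series test then forces the tail to inherit the sign of its first term, and combined with $P_i(x)\ge 0$ this yields $g_i(x) > 0$ on $(0,\pi/2)$. The main obstacle will be the bookkeeping: because each of the functions $(x/\sin x)^2 + x/\tan x$, $2\sin x/x + \tan x/x$ and $2x/\sin x + x/\tan x$ carries $\sin x$ in a denominator, differentiation produces bulkier polynomial parts $P_i$ and different coefficient polynomials $R_n^{(i)}$ than in Theorem \ref{Wilker-Inequality-ChenConjecture}, so the threshold $n_i$ and the associated polynomial inequality $q_n^{(i)} - 20\,p_n^{(i)} > 0$ must be derived and verified separately in each of the three cases.
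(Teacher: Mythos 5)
Your proposal follows essentially the same route as the paper: reduce each double inequality to two-sided bounds on an auxiliary function $f_i$, identify the endpoint limits with the stated constants, and prove monotonicity by writing $f_i'$ as a positive denominator times a series whose tail is an alternating sum with termwise-decreasing terms (verified via a ratio test reduced to a polynomial-in-$n$ inequality). The only adjustments you will meet in practice are cosmetic: for the functions carrying $\sin x$ in a denominator the paper's derivative has $\sin^3 x$ (still positive on $(0,\pi/2)$) and the product-to-sum step produces harmonics $x$, $2x$, $3x$, so the coefficient polynomials involve $9^n$ and $4^n$ and the crude bound becomes $\tfrac{9}{2}(\pi/2)^2<12$ rather than $8(\pi/2)^2<20$ — exactly the case-by-case recalibration you already anticipate.
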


\begin{proof}
We only prove inequality \eqref{HuygensType-Chen-typeInequality}. The proofs of \eqref{Wilker-Chen-typeInequality} and \eqref{Huygens-Chen-typeInequality} are  analogous.
The inequality \eqref{HuygensType-Chen-typeInequality} can be written as
\begin{align*}
\rho>F(x)>\varrho,
\end{align*}
where
\begin{align*}
F(x)=\frac{1}{x^4}\left(-\frac{2\left(\frac{x}{\sin x}\right)+\frac{x}{\tan x}-3}{x^{3}\tan x}+\left(\frac{1}{60}-\frac{1}{280}x^2\right)\right).
\end{align*}
Direct computations yield
\begin{align*}
\lim_{x\to0}F(x)=\frac{83}{100800}  \quad\text{and}  \quad   \lim_{x\to\pi/2}F(x)=\frac{56-3\pi^2}{210\pi^4}.
\end{align*}
In order to prove \eqref{HuygensType-Chen-typeInequality}, it suffices to show that $F(x)$ is strictly increasing on $(0, \pi/2)$.

Differentiation yields
\begin{align*}
F'(x)=\frac{G(x)}{420x^{8}\sin^{3}x},
\end{align*}
with
\begin{align*}
G(x)&=2520x\sin(2x)+(2520x-3x^5+28x^3)\sin x\cos^{2}x\\
&\quad+(3x^5-1260x-28x^3)\sin x+(840x^2-8820)\cos x+840x^2\cos^{2}x\\
&\quad+8820\cos^{3}x+840x^2\\
&=2520x\sin(2x)+(2520x-3x^5+28x^3)\left(\frac{1}{4}\sin x+\frac{1}{4}\sin (3x)\right)\\
&\quad+(3x^5-1260x-28x^3)\sin x+(840x^2-8820)\cos x+420x^2\Big(1+\cos(2x)\Big)\\
&\quad+8820\left(\frac{1}{4}\cos(3x)+\frac{3}{4}\cos x\right)+840x^2\\
&=\sum_{n=6}^{\infty}(-1)^{n}U_n(x),
\end{align*}
where
\begin{align*}
U_n(x)&=\Big((8n^5-40n^4+238n^3-302n^2-33924n+178605)9^n-(34020n^2+187110n)4^n\\
&\quad-5832n^5+29160n^4-64638n^3-215298n^2+222588n-178605\Big)\frac{x^{2n}}{81\cdot(2n)!}.
\end{align*}

Direct computation yields
\begin{align*}
\frac{U_{n+1}(x)}{U_n(x)}=\frac{9x^2P_n}{2Q_n},
\end{align*}
where
\begin{align*}
P_n&=(8n^5+158n^3+252n^2-33934n+144585)9^n-(15120n^2+113400n+98280)4^n\\
&\quad-648n^5-32508n^2-34938n-23625-702n^3
\end{align*}
and
\begin{align*}
Q_n&=(2n+1)(n+1)\Big((8n^5-40n^4+238n^3-302n^2-33924n+178605)9^n\\
&\qquad\qquad\qquad\qquad-(34020n^2+187110n)4^n-5832n^5+29160n^4-64638n^3\\
&\qquad\qquad\qquad\qquad-215298n^2+222588n-178605\Big).
\end{align*}
Noting that $\frac{9}{2}\left(\frac{\pi}{2}\right)^2<12$, we find that for $0<x<\pi/2$ and $n\geq6$,
\begin{align*}
\frac{U_{n+1}(x)}{U_n(x)}<\frac{9(\pi/2)^2P_n}{2Q_n}<\frac{12P_n}{Q_n}<1,
\end{align*}
since\footnote{The inequality \eqref{Proof-since-Huygens-nequality} is proved in the appendix.}
\begin{align}\label{Proof-since-Huygens-nequality}
Q_n-12P_n>0 \quad \text{for}\quad n\geq6.
\end{align}
Therefore, for fixed $x\in(0, \pi/2)$, the sequence $n\mapsto U_n(x)$ is strictly decreasing for $n\geq6$. Hence, we have
\begin{align*}
G(x)>0, \qquad 0<x<\frac{\pi}{2}.
\end{align*}
We then obtain that $F'(x)>0$  for $0 <x <\pi/2$. Hence, the inequality \eqref{HuygensType-Chen-typeInequality} holds with the best possible constants given in \eqref{Huygens-Chen-typeInequalityBestconstants}. The proof is complete.
\end{proof}
\begin{remark}
The upper bound in \eqref{Wilker-Chen-typeInequality} is sharper than the upper bound in \eqref{secondWilker-1}.
The inequalities \eqref{Huygens-Chen-typeInequality} are sharper than the inequalities \eqref{Huygens-generalization-ineq-1}.
\end{remark}
\begin{remark}
Chen and Paris \cite{Chen-Paris-submissionII} showed that for $0<x<\pi/2$,
\begin{align}\label{Thm-Huygens-type-inequality1}
3+\theta_1x^3\tan x<2\left(\frac{x}{\sin x}\right)+\frac{x}{\tan x}<3+\theta_2x^3\tan x
\end{align}
with the best possible constants
\begin{align*}
\theta_1=0\quad \text{and}\quad \theta_2=\frac{1}{60}.
\end{align*}
The double inequality \eqref{HuygensType-Chen-typeInequality}  is an improvement on the double inequality  \eqref{Thm-Huygens-type-inequality1}.
\end{remark}
\vskip 4mm

\begin{center}
{\bf Appendix A:  Proof of \eqref{Monotinic-ratio2}}
\end{center}
\setcounter{section}{1}
\setcounter{equation}{0}
\renewcommand{\theequation}{\Alph{section}.\arabic{equation}}

Noting that $\pi^2<10$, in order to prove   \eqref{Monotinic-ratio2}, it suffices to show that  for $k\geq2N+1$,
\begin{align}\label{Monotinic-ratio2proof}
&16^{k+N+1}-(8\cdot256^N+64^N)4^{k+1}+8\cdot1024^N-10(4^k-16^N)(4^{k+1}-16^N)\nonumber\\
&\quad=\Big((4^{2N+2}-40)\cdot4^k+50\cdot16^N-32\cdot256^N-4\cdot64^N\Big)4^k+(8\cdot1024^N-10\cdot256^N)\nonumber\\
&\quad>0.
\end{align}
We see that  for $k\geq2N+1$,
\begin{align*}
&(4^{2N+2}-40)\cdot4^k+50\cdot16^N-32\cdot256^N-4\cdot64^N\\
&\quad>(4^{2N+2}-40)\cdot4^{2N+1}+50\cdot16^N-32\cdot256^N-4\cdot64^N\\
&\quad=224\cdot256^N-590\cdot16^N-4\cdot64^N>0
\end{align*}
and
\begin{align*}
8\cdot1024^N-10\cdot256^N>0.
\end{align*}
Hence, \eqref{Monotinic-ratio2proof} holds for $k\geq2N+1$.

\vskip 4mm

\begin{center}
{\bf Appendix B: Proof of \eqref{Proof-since-Wilker-nequality}}
\end{center}
\setcounter{section}{2}
\setcounter{equation}{0}
\renewcommand{\theequation}{\Alph{section}.\arabic{equation}}
\begin{align*}
q_n-20p_n&=\Big(3780n^3-22680n^2-112455n+273105\Big)4^n+128n^9-576n^8+5248n^7\\
&\quad+2016n^6-32984n^5+70476n^4+250052n^3-134916n^2-512244n-226800\\
&=\Big(179550+397845(n-9)+79380(n-9)^2+3780(n-9)^3\Big)4^n\\
&\quad+49648561200+46968464520(n-9)+19975332000(n-9)^2\\
&\quad+5019956996(n-9)^3+822741108(n-9)^4+91303912(n-9)^5\\
&\quad+6864480(n-9)^6+337024(n-9)^7+9792(n-9)^8+128(n-9)^9\\
&>0 \quad \text{for}\quad n\geq9.
\end{align*}

\vskip 2mm

\begin{center}
{\bf Appendix C: Proof of \eqref{Proof-since-Huygens-nequality}}
\end{center}
\setcounter{section}{3}
\setcounter{equation}{0}
\renewcommand{\theequation}{\Alph{section}.\arabic{equation}}

We now show that for $ n\geq6$,
\begin{align*}
Q_n-12P_n&=(16n^7-56n^6+268n^5-70412n^3+252112n^2+909099n-1556415)9^n\\
&\quad+70n^4\cdot9^n-(68040n^4+476280n^3+413910n^2-1173690n-1179360)4^n\\
&\quad-(11664n^7-40824n^6+39852n^5+595350n^4+256932n^3-485352n^2\\
&\qquad-106029n-104895)>0.
\end{align*}
 It suffices to show that  for $ n\geq6$,
\begin{align}\label{Proof-since-Huygens-nequality1}
\left(\frac{9}{4}\right)^n>A_n,
\end{align}
where
\begin{align*}
A_n=\frac{68040n^4+476280n^3+413910n^2-1173690n-1179360}{16n^7-56n^6+268n^5-70412n^3+252112n^2+909099n-1556415},
\end{align*}
and
\begin{align}\label{Proof-since-Huygens-nequality2}
\cdot9^n&>\frac{1}{70n^4}\Big(11664n^7-40824n^6+39852n^5+595350n^4+256932n^3-485352n^2\nonumber\\
&\qquad\qquad-106029n-104895\Big).
\end{align}

By induction with respect to  $n$,  we can prove the inequalities \eqref{Proof-since-Huygens-nequality1} and \eqref{Proof-since-Huygens-nequality2}. Here, we only prove the inequality \eqref{Proof-since-Huygens-nequality1}. The proof of \eqref{Proof-since-Huygens-nequality2} is  analogous.

For $n=6$ in \eqref{Proof-since-Huygens-nequality1}, we find  that
\begin{align*}
\left(\frac{9}{4}\right)^6=\frac{531441}{4096} =129.746\ldots\quad \text{and}\quad A_6=\frac{3138660}{27229}=115.269\ldots.
\end{align*}
This shows that \eqref{Proof-since-Huygens-nequality1} holds  for $n=6$.
\par
Now we assume  that \eqref{Proof-since-Huygens-nequality1} holds
for some $n\geq6$. Then,  for $n\mapsto n+1$ in \eqref{Proof-since-Huygens-nequality1}, by using the induction hypothesis,
we have
\begin{align*}
&\left(\frac{9}{4}\right)^{n+1}-A_{n+1}>\frac{9}{4}A_n-A_{n+1}=\frac{2835R_n}{2S_{n}T_{n}},
\end{align*}
where
\begin{align*}
R_n&=960n^{11}+12384n^{10}+73088n^9+256200n^8-3508908n^7-22121984n^6+50474996n^5\\
&\quad+274552068n^4-445858781n^3-777353865n^2+997107984n-660306024\\
&=878926761468+1894841991720(n-6)+1695853296525(n-6)^2\\
&\quad+849645117283(n-6)^3+268187103036(n-6)^4+56595283460(n-6)^5\\
&\quad+8234103112(n-6)^6+835076820(n-6)^7+58479432(n-6)^8+2716928(n-6)^9\\
&\quad+75744(n-6)^{10}+960(n-6)^{11},
\end{align*}
\begin{align*}
S_n&=16n^7-56n^6+268n^5-70412n^3+252112n^2+909099n-1556415\\
&=1715427+679323(n-6)+1087672(n-6)^2+509908(n-6)^3+98760(n-6)^4\\
&\quad+10348(n-6)^5+616(n-6)^6+16(n-6)^7
\end{align*}
and
\begin{align*}
T_n&=16n^7+56n^6+268n^5+1060n^4-68292n^3+43052n^2+1203203n-465388\\
&=4102070+4834979(n-6)+3323012(n-6)^2+1021308(n-6)^3+160300(n-6)^4\\
&\quad+14380(n-6)^5+728(n-6)^6+16(n-6)^7.
\end{align*}
Hence, we have
\begin{align*}
\left(\frac{9}{4}\right)^{n+1}>A_{n+1}.
\end{align*}
Thus, by  the principle of  mathematical induction,  the inequality \eqref{Proof-since-Huygens-nequality1} holds  for $n\geq6$.

 \vskip 4mm

\section*{Acknowledgement} Some computations in this paper
were performed using Maple software.

 \vskip 8mm

\enddocument